\newcommand{\hh}{{\mathbb{H}}}
\newcommand{\sph}{{\mathbb{S}}}
\newcommand{\ga}{\gamma}
\newcommand{\hhh}{\mathcal{H}}
\newcommand{\dd}{\,\mathrm{d}}	\newcommand{\de}{\partial}		
\renewcommand{\div}{\operatorname{div}}	
\newcommand{\R}{\mathbb{R}}	
\newcommand{\N}{\mathbb{N}}	
\newcommand{\HH}{\mathbb{H}}	
\newcommand{\simdiff}{\triangle}
\newcommand{\one}{{\mathds 1\!}} 
\newcommand{\LL}{\mathcal{L}}
\newcommand{\Span}{\operatorname{span}}
\renewcommand{\ge}{\geqslant}
\renewcommand{\le}{\leqslant}
\newtheorem{theorem}{Theorem}[section]
\newtheorem{proposition}[theorem]{Proposition}
\newtheorem{lemma}[theorem]{Lemma}
\newtheorem{corollary}[theorem]{Corollary}
\newtheorem{thm}{Theorem}[section]
\theoremstyle{definition}
\theoremstyle{remark}
\numberwithin{equation}{section}
\title{Area-minimizing cones in the Heisenberg group $\hh^1$}
\author[Nicolussi Golo]{Sebastiano Nicolussi Golo}
\address{Department of Mathematics and Statistics, University of Jyväskylä, Jyväskylä, Finland}	
\email{sebastiano2.72@gmail.com}
\thanks{S.N.G.~has been supported
	by University of Padova STARS Project ``Sub-Riemannian Geometry and Geometric Measure Theory Issues: Old and New''.
	Both authors have been supported 
	by the INdAM – GNAMPA Project 2019 ``Rectifiability in Carnot groups''.}
\author{Manuel Ritoré}
\subjclass[2000]{53C17,49Q05,49Q10}
\address{Departamento de Geometría y Topología \& Research Unit MNat \\ Universidad de Granada \\ Granada, Spain}
\email{ritore@ugr.es}
\thanks{M. R. has been supported by MEC-Feder grant MTM2017-84851-C2-1-P, Junta de Andalucía grant A-FQM-441-UGR18, MSCA GHAIA, and Research Unit MNat UCE-PP2017-3}
\begin{document}

\date{\today}

\begin{abstract}
We present a characterization of minimal cones of class $C^2$ and $C^1$ in the first Heisenberg group $\mathbb{H}$, with an additional set of examples of minimal cones that are not of class $C^1$.
\end{abstract}

\maketitle

\thispagestyle{empty}

\bibliographystyle{amsplain}
\nocite{*}

\setcounter{tocdepth}{2}
\tableofcontents

\section{Introduction}
The interest towards Geometric Measure Theory in the Heisenberg group
 grew drastically in the last decades, see for instance 
 \cite{MR1404326,MR1871966,MR2983199,2020arXiv200402520J} and the references therein.
Despite many deep results, fundamental questions still remain open,
the main difficulty being that sets of finite perimeter may not be rectifiable sets in the Riemannian sense.

In the effort to understand minimal surfaces in the first Heisenberg group, we are presenting a characterization of minimal cones of class $C^2$ and $C^1$.
Furthermore, we also provide a set of examples of minimal cones that are not of class $C^1$.
By \emph{cone} we mean a set that is invariant under the anisotropic dilations of the Heisenberg group.

Complete minimal surfaces of class $C^2$ have been classified in~\cite{MR2609016}.
We provide a self-contained classification of minimal cones of class $C^2$, as it is a simple exercise in our case.
Minimal surfaces of class $C^1$ have been studied in \cite{MR3412382,MR3406514}.
Tentatives to study minimal surfaces with regularity lower than $C^1$ can be found 
in  \cite{MR3753176,MR3984100}.

The construction of minimal cones is the following, see Section~\ref{sec5ee1fa47} for details. Given proper disjoint open subarcs $I,J$ of the unit circle $\sph^1\subset\R^2$, let $L$ be the bisectrix of $I$. Then consider the family of planar curves made of (see Figure~\ref{fig5ee1f373}):
\begin{enumerate}
\item rays emanating from $0$ and intersecting~$\overline{J}$;
\item the half-line $L$ together with half-lines starting from $L$ parallel to the two boundary lines of $0\# I$ (the cone in $\R^2$ with vertex $0$ over $I$).
\end{enumerate}

\begin{figure}[h]
\label{fig5ee1f373}
\begin{tikzpicture}[scale=1]
\clip(0,0) circle (2.5);
\draw (0,0) circle (2);
\foreach \x in {0,...,14}
\draw[rotate=165,shift={(0.3*\x,0)},rotate=-35,blue] (0,0) -- (3,0);
\foreach \x in {0,...,14}
\draw[rotate=165,shift={(0.3*\x,0)},rotate=35,blue] (0,0) -- (3,0);
\draw[rotate=130, thick,blue] (2,0) arc (0:70:2) node[circle,fill=white,near start] {$I$};
\draw[rotate=130+35,thick,black] (0,0) -- (3,0) node[circle,fill=white,near start]{$L$};
\filldraw [rotate=130,blue] (2,0) circle (0.1);
\filldraw [rotate=200,blue] (2,0) circle (0.1);
\foreach \x in {0,...,12}
\draw[rotate=0+5*\x,purple] (0,0) -- (3,0);
\draw[rotate=0,thick,purple] (2,0) arc (0:60:2) node[circle,fill=white,midway]  {$J$};
\filldraw [purple] (2,0) circle (0.1);
\filldraw [rotate=60,purple] (2,0) circle (0.1);
\end{tikzpicture}
\caption{The configuration of lines in $\R^2$ for given arcs $I, J$.}
\end{figure}
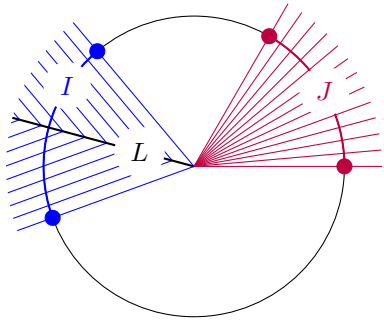

All these curves in $\R^2$ lift uniquely to horizontal curves in $\HH^1$, whose union form a surface $C(I;J)\subset\HH^1$ with non-empty boundary in general. The lifted curves are the characteristic curves of~$C(I;J)$.
We are interested in particular in the conical surface $C(I):=C(I;\sph^1\setminus I)$.

Similarly, we can construct a surface $C(\mathcal I)$ from a (possibly infinite, but however countable) family $\mathcal I$ of disjoint open arcs  of $\sph^1$. Roughly the surface $C(\mathcal{I})$ is built applying the above construction to any connected component $I$ of $\mathcal{I}$ and to any connected component $J$ of $\mathbb{S}^1\setminus \bigcup_{I\in\mathcal{I}} \overline{I}$, see Section~\ref{sec5ee1fa47} and Figure~\ref{fig5ee1f4ec}. These are minimal cones with different degrees of regularity.

\begin{thm}\label{thm5ee1f82e}
	Let $\mathcal I$ be a family of disjoint arcs of $\sph^1$.
	\begin{enumerate}
	\item The surface $C(\mathcal I)$ is a minimal cone.
	\item The surface $C(\mathcal I)$ is of class $C^1$ if and only if $\mathcal I$ is finite and the closure of $\bigcup_{I\in\mathcal I}I$ is $\sph^1$.
	\end{enumerate}
\end{thm}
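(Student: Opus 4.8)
\emph{Strategy and the cone property.} The plan is to reduce everything to two explicit local models and then glue. Writing points of $\HH$ as $(x,y,t)$ with horizontal frame $X=\partial_x-\tfrac{y}{2}\partial_t$, $Y=\partial_y+\tfrac{x}{2}\partial_t$, I will represent $C(\mathcal I)$, away from the vertical axis, as a graph $t=u(x,y)$ over a sector of $\R^2$. Over a complementary arc (a gap of $\bigcup\mathcal I$) the sheet is the radial one, lying in $\{t=0\}$, so there $u\equiv 0$. Over an arc $I_k\in\mathcal I$, with boundary rays at angles $a_k<b_k$, bisectrix angle $\mu_k=(a_k+b_k)/2$ and half-opening $\alpha_k=(b_k-a_k)/2$, the sheet is swept by the horizontal lifts of the half-lines issuing from $L_k$ parallel to the two boundary rays; parametrising these lifts and eliminating the parameters yields a closed formula for $u$ that is homogeneous of degree $2$. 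Since $t=u(x,y)$ with $u$ positively $2$-homogeneous is exactly invariance under $\delta_\lambda(x,y,t)=(\lambda x,\lambda y,\lambda^2 t)$, this already shows $C(\mathcal I)$ is a cone.

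\emph{Minimality (part 1).} Here I will use that a $C^2$ surface in $\HH$ is minimal precisely when its characteristic curves project to straight lines in $\R^2$. By construction each sheet is foliated by lifts of straight lines, so $H\equiv 0$ on the regular part of every sheet, and it remains to treat the one-dimensional set where sheets meet. A direct computation of the horizontal unit normal gives $\nuh=(-\sin\beta,\cos\beta)$ in the frame $X,Y$ along a ray of angle $\beta$, coming from either adjacent sheet, radial or fan; hence $\nuh$ extends continuously across each boundary ray, which is therefore a regular characteristic curve carrying no singular contribution to the divergence defining $H$. The bisectrices $L_k$ are instead singular curves: there the tangent plane is horizontal, $|N_h|=0$, and $\nuh$ jumps. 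Along $L_k$ each fan sheet is symmetric, so the balancing condition for singular curves in the sub-Riemannian area first variation holds; invoking that first-variation formula then shows $C(\mathcal I)$ is area-stationary for every family $\mathcal I$, completing (1).

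\emph{The $C^1$ dichotomy (part 2).} This is driven by $\nabla u$ on the boundary rays. A computation gives, at the point of the ray of angle $\beta$ at distance $\rho$ from the origin,
\[
\nabla u=\tfrac{\rho}{2}(\sin\beta,-\cos\beta)\ \text{ from a fan sheet bounded by that ray,}\qquad \nabla u=(0,0)\ \text{ from a radial sheet,}
\]
and $\nabla u=\tfrac{\rho}{2}(-\sin\mu,\cos\mu)$ along a bisectrix of angle $\mu$. Two consequences are immediate. First, two adjacent fan sheets sharing a boundary ray have the same gradient $\tfrac{\rho}{2}(\sin\beta,-\cos\beta)$ there, \emph{independently of their openings}, so they glue in a $C^1$ fashion; across a bisectrix one checks likewise that $\nabla u$ matches and only the second derivatives jump. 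Second, whenever a fan sheet abuts a radial sheet the gradients $\tfrac{\rho}{2}(\sin\beta,-\cos\beta)\neq(0,0)$ disagree, so such a junction is never $C^1$. Hence a necessary condition for $C^1$ is the absence of radial sheets, i.e. that the complement of $\bigcup\mathcal I$ in $\sph^1$ have empty interior, equivalently $\overline{\bigcup\mathcal I}=\sph^1$.

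\emph{Finiteness, the converse, and the main obstacle.} If $\mathcal I$ is infinite its arcs accumulate at some ray of angle $p$, and the bisectrices and the boundary rays both converge to that ray; but along bisectrices $\nabla u\to\tfrac{\rho}{2}(-\sin p,\cos p)$ while along boundary rays $\nabla u\to-\tfrac{\rho}{2}(-\sin p,\cos p)$, two opposite vectors, so $\nabla u$ cannot be continuous at the ray of angle $p$ and $C(\mathcal I)$ is not $C^1$. Conversely, if $\mathcal I$ is finite and $\overline{\bigcup\mathcal I}=\sph^1$, consecutive arcs are adjacent, every junction is fan--fan, and the gradient computation above shows $u$ is $C^1$ across each ray, so away from the apex $C(\mathcal I)$ is a $C^1$ graph. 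The main obstacle I anticipate is not these gradient comparisons, which are routine once $u$ is written down, but the rigorous treatment of minimality along the singular curves $L_k$ and at the apex: one must use the correct first-variation formula to verify that the singular set contributes no boundary term, and, for infinite families, argue that area-stationarity still makes sense despite the accumulation ray.
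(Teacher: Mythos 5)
Your part (2) is sound and follows essentially the same route as the paper: the explicit gradient computations for the fan and radial sheets, the matching of gradients across a shared boundary ray independently of the openings, the mismatch at fan--radial junctions forcing $\overline{\bigcup\mathcal I}=\sph^1$, and the oscillation of $\nabla u$ between opposite vectors at an accumulation ray when $\mathcal I$ is infinite (the paper adds one refinement you omit: since $\nabla u$ stays \emph{bounded}, the discontinuity of $\nabla u$ really does rule out $C^1$-ness of the \emph{surface}, not just of the graph function).

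The genuine gap is in part (1). In this paper ``minimal'' is defined as \emph{area-minimizing}: $S$ must be the reduced boundary of a perimeter minimizer. Your argument --- characteristic curves are straight lines, hence $H\equiv 0$ on the sheets, plus a balancing condition along the singular curves $L_k$ in the first-variation formula --- only yields area-\emph{stationarity}, which is strictly weaker; stationary surfaces in $\hh^1$ need not minimize, so this approach cannot prove the stated theorem. The paper instead exhibits a \emph{calibration}: the upward horizontal unit normal, extended $t$-invariantly, is shown to have vanishing distributional divergence by checking $\div(v)=0$ on each open sector and the jump condition $\sum_j\langle V_j,\nu_{\Omega_j}\rangle=0$ across the rays $L_j$ and $0\#\de I_j$ (Propositions~\ref{prop5e95c720} and~\ref{prop5e95c6fe}); this gives perimeter minimization outright, with no first-variation formula and no regularity of the glued surface needed. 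Moreover, the issue you flag yourself as the ``main obstacle'' --- making stationarity meaningful for infinite $\mathcal I$, where the surface fails to be $C^1$ near the accumulation ray --- is left unresolved in your proposal, whereas the paper disposes of it cleanly: finite truncations $C(I_1,\ldots,I_k)$ are perimeter minimizers, $u_{\mathcal I}$ is a locally uniform limit of the truncated functions (using $|u_\alpha(v)|\le|v|^2\tan\alpha$), and perimeter minimizers are stable under local $L^1$ convergence (Proposition~\ref{prop5e96e7aa}). To repair your proof you would need to replace the first-variation argument by this calibration-plus-approximation scheme, or otherwise supply an argument upgrading stationarity to minimality, which is false in general.
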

Theorem~\ref{thm5ee1f82e} is proven in Propositions~\ref{prop5e96b917} and~\ref{prop5ee1f81b}.
With these examples at hand, 
we provide a classification of minimal cones of class $C^1$.
The classification is based on the study of the singular set of minimal surfaces, that is, the set of points where the tangent plane is horizontal, see \cite{MR2983199}.
See Section~\ref{sec5ee1fd1e} for the proof.

\begin{thm}\label{thm5ee1fc47}
	If $S\subset\hh^1$ is a minimal cone of class $C^1$,
	then one of the following possibilities holds:
	\begin{enumerate}
	\item $S$ is a vertical plane, or
	\item $S$ is the horizontal plane $\{t=0\}$, or
	\item $S=C(I_1,\ldots,I_k)$ for some disjoint non-empty open arcs $I_1,\ldots,I_k$ in $\sph^1$ with $\sph^1=\bigcup_{j=1}^k\bar I_j$ and $k\ge2$.
	\end{enumerate}
	These cases can be distinguished by their singular set: empty in the first case, a single point in the second case, and a finite family of horizontal half-lines starting from the vertex in the third case.
\end{thm}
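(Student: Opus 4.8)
The plan is to recover $S$ from its singular set $\Sigma$ — the closed set where the tangent plane coincides with the horizontal distribution. I would use two facts. First, the regularity theory for $C^1$ $H$-minimal surfaces (\cite{MR2983199,MR3412382,MR3406514}): off $\Sigma$ the surface is foliated by characteristic curves, each the horizontal lift of a straight line in $\R^2$, and $\Sigma$ itself is a disjoint union of isolated characteristic points and $C^1$ characteristic curves. Second, $S$ being a cone means $\delta_\la(S)=S$ for all $\la>0$, where $\delta_\la$ are the intrinsic dilations; since $\delta_\la$ is an automorphism preserving the horizontal distribution, it preserves the condition defining $\Sigma$, so $\delta_\la(\Sigma)=\Sigma$ for every $\la>0$.

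I would then classify the dilation-invariant set $\Sigma$. An isolated point of $\Sigma$ is fixed by the connected group $\{\delta_\la\}_{\la>0}$, hence is the vertex $0$. A $\delta_\la$-invariant $C^1$ curve must be a single orbit $\{\delta_\la p\}$, and a short computation shows such an orbit is horizontal only if $p\in\{t=0\}$, in which case it is a horizontal half-line from $0$. Thus $\Sigma$ is either empty, the single point $\{0\}$, or the union of $0$ with a family of horizontal half-lines from the vertex — precisely the trichotomy in the statement. The first two cases are settled quickly by reconstructing $S$ from its foliation: with no characteristic point, every ruled configuration is excluded except the one whose projection degenerates to a single line through $0$, so $S$ is a vertical plane; with the single characteristic point at $0$, the foliation must consist of rays through the vertex, each of which lifts at constant height $t=0$, so $S=\{t=0\}$.

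The main work is the third case. Here I would examine one angular sector between two consecutive singular half-lines: on it $S\setminus\Sigma$ is foliated by horizontal lifts of straight lines, and dilation invariance together with the requirement that the foliation meet the bounding singular half-lines should force these lines to be exactly the family of half-lines parallel to the two boundary rays (or rays through the vertex in the flat sectors) used in the construction of Section~\ref{sec5ee1fa47}. Gluing the sectors would identify $S$ with some $C(\mathcal I)$, and then the $C^1$ hypothesis with Theorem~\ref{thm5ee1f82e}(2) forces $\mathcal I$ to be finite with $\sph^1=\bigcup_j\bar I_j$. I expect the genuine obstacle to be this local reconstruction near a singular half-line for a surface that is merely $C^1$: one must prove that the characteristic lines abut the singular curve in the rigid model pattern, with no room for other behaviour, which is exactly where the fine $C^1$ regularity of \cite{MR3412382,MR3406514} enters. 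The resulting trichotomy of singular sets — empty, one point, finitely many half-lines — then yields the claimed way of distinguishing the three cases.
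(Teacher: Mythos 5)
Your overall skeleton (trichotomy driven by the dilation-invariant singular set) matches the paper's, but the decisive step in your third case is wrong, and it is precisely where the theorem's content lies. You claim that dilation invariance together with the requirement that the characteristic lines abut the bounding singular half-lines forces the ruling to be the model pattern of Section~\ref{sec5ee1fa47}, and you locate the residual difficulty in ``fine $C^1$ regularity'' near the singular curve. Neither is correct, as the following \emph{skew cone} shows: take $u(x,y)=y(y\cot\alpha-x)$ for $y\ge 0$ and $u(x,y)=y(-y\cot\beta-x)$ for $y\le 0$, with $\alpha\neq\beta$. The gradients $(-y,\,2y\cot\alpha-x)$ and $(-y,\,-2y\cot\beta-x)$ agree on $\{y=0\}$, so $u$ is $C^{1,1}$; its $t$-graph is a cone whose singular set is the two horizontal half-lines over the positive and negative $x$-axes, and whose regular part is ruled by horizontal half-lines (lifts of lines at angle $\alpha$ above the $x$-axis, angle $\beta$ below) with endpoints on the singular set. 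This surface has every property your reconstruction uses --- conicity, $C^1$ smoothness, the ruling of Proposition~\ref{prop5e97045a}, characteristic lines meeting the singular half-lines --- yet it is not any $C(I_1,\ldots,I_k)$, because the singular half-lines are not bisectrices, and it is not minimal. What eliminates it is a \emph{second} use of minimality, at the singular lines: since $0\in S_0$, the cone $S$ is a $t$-graph, so by Lemma~\ref{lem5e973be0} the $t$-invariant extension of its horizontal unit normal must have vanishing distributional divergence, and the jump condition~\eqref{eq5e956c15} of Proposition~\ref{prop5e95c720} across each singular half-line $L_j$ forces $\cos\alpha=\cos\beta$, i.e.\ the two adjacent families of characteristic lines meet $L_j$ at equal angles, so $L_j$ is a bisectrix. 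This calibration argument is the heart of the paper's proof; no regularity theory can replace it, since the skew cone is exactly as regular as the genuine examples.

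Two smaller gaps. In your second case you assert the foliation ``must consist of rays through the vertex,'' but Proposition~\ref{prop5e97045a} constrains only the endpoints of ruling segments; a ruling line through a point off $\{t=0\}$ can have no endpoints at all (note that no horizontal line through such a point passes through $0$). The paper's Lemma~\ref{lem5e97064c2} excludes this possibility by showing that a complete horizontal ruling line, together with all its dilates, produces extra singular points, contradicting $S_0=\{0\}$; some such computation is needed. Finally, your opening appeal to a structure theorem for the singular set (isolated points plus $C^1$ curves) is proved in \cite{MR2983199} for $C^2$ surfaces and is not available off the shelf in the $C^1$ class; the paper avoids it by deriving the structure of $S_0$ from conicity alone (Lemma~\ref{lem:half-line}) and proving finiteness of the half-lines by a direct isolation argument ($T_pS=pH\neq H$ at $p\in L_j\setminus\{0\}$ gives $U\cap S_0=U\cap L_j$), rather than through Theorem~\ref{thm5ee1f82e}(2) applied to a possibly infinite family, whose sector-by-sector reconstruction near accumulating singular half-lines you would otherwise have to justify.
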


Not all $C^1$ minimal cones are of class $C^2$.
In the third class, the only minimal cones of class $C^2$ are among those with  $k=2$.
\begin{thm}\label{thm5ee202de}
	If $S\subset\hh^1$ is a minimal cone of class $C^2$,
	then $S$ is a vertical plane, or the horizontal plane $\{t=0\}$, or  rotations about the $t$-axis of the graph of the function $t=-xy$.
\end{thm}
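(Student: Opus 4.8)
By definition every surface of class $C^2$ is of class $C^1$, so a $C^2$ minimal cone $S\subset\HH^1$ must be one of the three types listed in Theorem~\ref{thm5ee1fc47}. The vertical planes and the horizontal plane $\{t=0\}$ are smooth, hence of class $C^2$, and they give the first two alternatives in the statement; thus the entire content of the theorem is to decide which of the cones $S=C(I_1,\dots,I_k)$ of the third type are of class $C^2$, and to identify them. The plan is to prove that $C^2$ regularity at the singular set forces $k=2$ with $\bar I_1,\bar I_2$ complementary half-circles, and that the resulting cone is a rotation about the $t$-axis of the graph $t=-xy$.

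The argument is local, carried out at an interior point $p$ of one of the singular half-lines. Recall from Theorem~\ref{thm5ee1fc47} that the singular set of $S$ is the union of the $k$ horizontal half-lines obtained by lifting the bisectrices $L_1,\dots,L_k$, and that away from it $S$ is foliated by the lifts of the planar characteristic lines, which inside the sector $0\#I_j$ are the half-lines parallel to the two boundary rays of that sector. The key local fact I would use is the structure of a $C^2$ minimal surface along a singular curve: writing $S$ as a graph $t=u(x,y)$ near $p$ and combining the minimal surface equation with the $C^2$ hypothesis, one shows that the characteristic curves meet the singular curve \emph{orthogonally}. I would obtain this from a second-order Taylor expansion of $u$ at $p$, where $C^1$ regularity forces the tangent plane along the singular curve to be horizontal, and matching the one-sided limits of the second derivatives of $u$ coming from the two sides of the singular line -- which are governed by the two families of parallel characteristic half-lines -- is possible only when those half-lines are perpendicular to the singular line. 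This is precisely the normal form realized by the model $t=-xy$, whose characteristic lines are parallel to a fixed coordinate direction and whose singular set is the orthogonal coordinate axis through the origin.

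Granting orthogonality, the counting is immediate. Since $L_j$ bisects the sector $0\#I_j$ and the two families of characteristic half-lines there are parallel to its two boundary rays, orthogonality of those half-lines to $L_j$ forces both boundary rays to be orthogonal to $L_j$; lying on opposite sides of $L_j$, they are then collinear, so $0\#I_j$ is a half-plane and $\bar I_j$ is a half-circle. As the $k$ sectors tile $\R^2$ and each now has opening angle $\pi$, we get $k\pi=2\pi$, that is $k=2$, with $\bar I_1,\bar I_2$ complementary half-circles meeting at a pair of antipodal points. Conversely, any such configuration is the image of $I_1=\{|\theta|<\pi/2\}$, $I_2=\{|\theta-\pi|<\pi/2\}$ under a rotation about the $t$-axis, and for this base configuration a direct check identifies $C(I_1,I_2)$ with the graph $t=-xy$ (its characteristic lines being parallel straight lines and its bisectrices the orthogonal axis, both lifting as required). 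Hence $S$ is a rotation about the $t$-axis of $\{t=-xy\}$, completing the classification.

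The main obstacle is the local orthogonality statement of the second step: extracting, from the $C^2$ hypothesis together with the minimal surface equation, the exact matching condition that the two one-sided characteristic fans must satisfy across a singular half-line. Care is needed to ensure that the reference point $p$ is an interior point of a singular half-line, where the singular set is a genuine $C^1$ curve and the local description of a $C^2$ minimal surface near a singular curve applies; the vertex of the cone, where all the singular half-lines meet and the homogeneity degenerates, must be excluded from the local analysis and is instead disposed of by the tiling and counting argument above.
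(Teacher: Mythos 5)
Your proposal is correct, but it takes a genuinely different route from the paper. The paper does not pass through its own $C^1$ classification at all: it proves Theorem~\ref{thm5ee202de} in one stroke by citing Theorem 5.1 of \cite{MR2435652}, which says that the only entire $C^2$ area-stationary $t$-graphs over $H$ are Euclidean planes and vertical rotations of the graphs $t=xy+(ay+b)$, and then observing that conicality forces $a=b=0$ (vertical planes, not being $t$-graphs, fall outside this and give the remaining case). You instead make Theorem~\ref{thm5ee202de} a corollary of Theorem~\ref{thm5ee1fc47}: a $C^2$ cone is $C^1$, hence a vertical plane, the plane $\{t=0\}$, or some $C(I_1,\dots,I_k)$, and you eliminate every cone of the third type except $k=2$ with half-circle arcs by a local analysis along the singular half-lines, then identify the survivor with a rotation of $\{t=-xy\}$. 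This matches the paper's own remark that ``in the third class, the only minimal cones of class $C^2$ are those with $k=2$.'' One improvement to note: the step you flag as the ``main obstacle'' --- extracting orthogonality of the characteristic lines from the minimal surface equation plus $C^2$ regularity --- needs no general structure theory here, because Theorem~\ref{thm5ee1fc47} already tells you that near an interior point of $L_j$ the surface is, up to a planar rotation, the graph of $u_{\alpha_j}$ from Lemma~\ref{lem:3.1}; then \eqref{eq5e963809} gives $\de_{yy}u_{\alpha_j}=2\,\mathrm{sign}(y)\cot\alpha_j$, whose one-sided limits $\pm 2\cot\alpha_j$ across the singular line agree if and only if $\cot\alpha_j=0$, i.e.\ $\alpha_j=\pi/2$. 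With that substitution your counting argument ($k$ sectors of opening $\pi$ tiling the plane, so $k=2$) and the verification $u_{\pi/2}(x,y)=-xy$ close the proof. The trade-off between the two approaches: the paper's proof is a single sentence but imports a substantial external classification, while yours is longer but entirely self-contained within the paper's results and makes explicit exactly which cones of the third class are ruled out and why.
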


Theorem~\ref{thm5ee202de} follows from Theorem 5.1 of \cite{MR2435652}, where it is proven that the unique entire $C^2$ area-stationary graphs over the plane $H$ in $\HH^1$ are Euclidean planes and vertical rotations of graphs of the form $t = xy + (ay + b)$, where $a$ and $b$ are real constants. In case the surface is a cone then $a=b=0$.

\subsection*{Plan of the paper}
The preliminary Section~\ref{sec5ee1fa82} introduces the main definitions and properties of the Heisenberg group that we need.
The construction of minimal cones that we sketched above is presented in detail in Section~\ref{sec5ee1fa47}.
Finally, we prove our main results in Section~\ref{sec5ee200a6}.

\section{Preliminaries}\label{sec5ee1fa82}

\subsection{The Heisenberg group}
We identify the first \emph{Heisenberg group} $\HH$ with $\R^3$ with coordinates $(x,y,t)$ where we set the group operation
\[
(x,y,t) * (x',y',t')
= (x+x' , y+y' , t+t'+ (x'y-xy') ) .
\]
The neutral element is $(0,0,0)$ and the inverse of $(x,y,t)$ is $(-x,-y,-t)$.
The \emph{dilation} of factor $\lambda>0$ centered at $0$ is the Lie group automorphism $\delta_\lambda(x,y,t) = (\lambda x,\lambda y,\lambda^2 t)$.
We choose the frame of left-invariant vector fields generated by $\de_x$, $\de_y$ and $\de_t$ at $0$
\[
X = \de_x + y\, \de_t,
\qquad
Y = \de_y - x\, \de_t,
\qquad
T = \de_t .
\]
Notice that $[X,Y]=-2T$.
These vector fields form a basis for the Lie algebra $\mathfrak h$ of $\HH$, which is stratified with first layer $\hhh=\Span\{X,Y\}$, the \emph{horizontal plane}, and second layer $[\hhh,\hhh]=\Span\{T\}$.

With an abuse of language, we denote by $C^k(\Omega;\hhh)$ (and $C^k_c(\Omega;\hhh)$) the space
of sections of class $C^k$ (with compact support in $\Omega$) of the left-invariant vector bundle generated by $\hhh$.
These sections are vector fields on $\R^3$.

One can easily see that, if $V=v_1X+v_2Y$ with $v_1$ and $v_2$ smooth functions, then the standard divergence in $\R^3$ applied to $V$ is

\[
\div(V) = Xv_1 + Yv_2 .
\]
If we consider the left-invariant Riemannian metric $g$ on $\HH$ making $X,Y,T$ and orthonormal basis, $\div(V)$ is also the divergence with respect to the Riemannian metric $g$.

The left-invariant vector bundle generated by $\hhh$ is the kernel of the contact form
\[
\omega = \dd t - y\dd x + x \dd y .
\]
Lipschitz curves in $\R^2$ can be lifted to $\HH$ in the following way.
\begin{lemma}\label{lem5e9758f5}
	Let $\gamma:[0,1]\to\R^2$, $\gamma(s)=(x(s),y(s))$, be a Lipschitz curve with $\gamma(0)=0$.
	Define $t:[0,1]\to\R$ by
	\[
	t(s) = \int_0^s  (y\dd x-x\dd y)[\gamma'(u)] \dd u
	= \int_0^s (y(u)x'(u) - x(u)y'(u)) \dd u .
	\]
	Then, the curve $s\mapsto(x(s),y(s),t(s))$ is the only horizontal Lipschitz curve in $\HH$ starting from $(0,0,0)$ and projecting to $\gamma$.
	
	Moreover, if $A(\gamma,s)=\{v\gamma(u):u\in[0,s],\ v\in[0,1]\}$ (with the orientation given by $\gamma$), then 
	\[
	t(s) = -2 \int_{A(\gamma,s)} \dd x\wedge\dd y ,
	\]
	which is called the \emph{balayage area spanned by $\gamma$}.
\end{lemma}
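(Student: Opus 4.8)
The plan is to split the statement into its two assertions and treat each with elementary tools: the lift by solving a first-order ODE, and the balayage-area identity by Stokes' theorem.

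First I would recall that a Lipschitz curve $c(s)=(x(s),y(s),t(s))$ is horizontal precisely when its velocity lies in $\ker\omega$ for a.e.\ $s$, i.e.\ when $\omega(c'(s)) = t'(s) - y(s)x'(s) + x(s)y'(s) = 0$. Since the projection of $c$ is prescribed to be $\gamma$, the first two components are forced to be $x,y$, so horizontality reduces to the single scalar equation $t'(s) = y(s)x'(s) - x(s)y'(s)$ for a.e.\ $s$, together with the initial condition $t(0)=0$ coming from $c(0)=0$. Because $\gamma$ is Lipschitz, $x',y'\in L^\infty([0,1])$ and $x,y$ are bounded, so the right-hand side is in $L^\infty$; hence its primitive vanishing at $0$ is exactly the displayed formula for $t$, is Lipschitz, and is the unique absolutely continuous solution of the equation with $t(0)=0$. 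This gives existence, uniqueness and the Lipschitz regularity of the lift in one stroke.

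For the balayage-area identity I would introduce the $1$-form $\beta = y\dd x - x\dd y$ on $\R^2$, whose line integral along $\gamma|_{[0,s]}$ equals $t(s)$ by the formula just obtained. Two observations drive the computation. First, $\dd\beta = \dd y\wedge\dd x - \dd x\wedge\dd y = -2\,\dd x\wedge\dd y$. Second, $\beta$ vanishes on every ray through the origin: parametrizing a segment by $r\mapsto r p$ gives $\beta = r p_2\, p_1\,\dd r - r p_1\, p_2\,\dd r = 0$ (equivalently $\beta = -r^2\,\dd\theta$ in polar coordinates). Since $\gamma(0)=0$, the region $A(\gamma,s)$ is the cone over $\gamma|_{[0,s]}$, and its boundary consists of the arc $\gamma|_{[0,s]}$ together with the radial segment from $\gamma(s)$ back to $0$. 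Applying Stokes' theorem and discarding the radial contribution, $-2\int_{A(\gamma,s)}\dd x\wedge\dd y = \int_{A(\gamma,s)}\dd\beta = \int_{\partial A(\gamma,s)}\beta = \int_{\gamma|_{[0,s]}}\beta = t(s)$, which is the claimed identity.

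The main obstacle is that $\gamma$ is only Lipschitz and $A(\gamma,s)$ need not be a smooth embedded domain, so the bare invocation of Stokes' theorem requires justification and the orientation of $A(\gamma,s)$ must be pinned down. To sidestep these points I would argue directly through the explicit parametrization $\Phi(u,v)=v\gamma(u)$, $(u,v)\in[0,s]\times[0,1]$: a one-line computation gives $\Phi^*(\dd x\wedge\dd y) = v\,(y(u)x'(u)-x(u)y'(u))\,\dd u\wedge\dd v$, and integrating in $v$ over $[0,1]$ produces the factor $\tfrac12$ and recovers $\pm\tfrac12\,t(s)$, the sign being fixed by the orientation of $A(\gamma,s)$ induced by $\gamma$. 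This change-of-variables version is robust to the low regularity of $\gamma$ and to self-intersections (the integral then being a signed, multiplicity-counted area), and it makes both the constant $-2$ and the orientation convention transparent; I would present it as the rigorous argument and keep the Stokes computation as the conceptual explanation.
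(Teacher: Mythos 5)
Your proposal is correct, and for the main assertion it is exactly the paper's argument: the paper's entire proof is the observation that horizontality of a Lipschitz curve is equivalent to $\omega[\eta']=0$ a.e., i.e.\ to $t'=yx'-xy'$, followed by ``Integrating, we get the statement.'' Your first paragraph is this same reduction, with the (correct, if routine) extra remarks on integrability and uniqueness among absolutely continuous solutions. Where you go beyond the paper is the balayage identity: the paper folds it into the word ``integrating'' and never justifies it, whereas you give both a conceptual Stokes argument (using $\dd\beta=-2\,\dd x\wedge\dd y$ and the vanishing of $\beta=y\,\dd x-x\,\dd y$ on radial segments) and a rigorous pullback computation via $\Phi(u,v)=v\gamma(u)$, which is indeed the right way to handle a merely Lipschitz, possibly self-intersecting $\gamma$. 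One small precision: your pullback with the $\dd u\wedge\dd v$ orientation yields $+\tfrac12 t(s)$, not $-\tfrac12 t(s)$, so the constant $-2$ of the statement is recovered only once one fixes ``the orientation given by $\gamma$'' to mean that $\gamma$ is the positively oriented part of $\partial A(\gamma,s)$ --- exactly the convention your Stokes computation uses; stating that explicitly would remove the residual $\pm$ hedge. This is a matter of pinning down a convention the paper itself leaves vague, not a gap.
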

\begin{proof}
	Notice that a Lipschitz curve $\eta:s\mapsto(x(s),y(s),t(s))$ is horizontal if and only if $\omega|_{\eta(s)}[\eta'(s)] = 0$ for almost all $s$,
	that is, $t'=yx'-xy'$.
	Integrating, we get the statement.
\end{proof}

\subsection{Sub-Riemannian perimeter}\label{sec5ee1facd}
Given a measurable set $E\subset\HH$ and an open set $\Omega\subset\HH$, the \emph{perimeter} of $E$ in $\Omega$ is defined as
\[
P(E;\Omega) := \sup\left\{ \int_E \div(V) \dd\LL^3 : V\in C^1_c(\Omega;\hhh),\ |V|\le1\right\} ,
\]
where $\LL^3$ is the Lebesgue measure in $\R^3$ that is, in our chosen coordinate system, a Haar measure of $\HH$.

A measurable set $E\subset\HH$ has \emph{locally finite perimeter} if for every bounded open set $\Omega\subset\HH$ we have $P(E;\Omega)<\infty$.
It turns out (see \cite{MR1871966}) that for a locally finite perimeter, the distributional gradient of the characteristic function $\one_E$ is a vector valued Radon measure, that is, there is a positive Radon measure $|\de E|$ and a unit horizontal vector field $\nu_E:\HH\to H$ such that $\nabla\one_E = \nu_E |\de E|$.
The measure $|\de E|$, and thus $\nabla\one_E$, is supported on the so-called \emph{reduced boundary} $\de^*E\subset\de E$.

\begin{proposition}[{\cite{MR1871966}}]
	Let $E\subset\HH^1$ be a set with locally finite perimeter and $V\in C^\infty(\HH;H)$ a smooth horizontal vector field, then
	\begin{equation}\label{eq5e956226}
	\int_E \div(V) \dd\LL^3  = - \int_{\de^*E} \langle V,\nu_E \rangle \dd|\de E| .
	\end{equation}
\end{proposition}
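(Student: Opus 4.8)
The plan is to read the identity off the polar decomposition quoted just before the statement in the case of compactly supported fields, and then to remove the compact-support restriction by a cut-off argument. Recall that the representation $\nabla\one_E=\nu_E\,|\de E|$ is precisely the Riesz representation of the linear functional $V\mapsto\int_E\div(V)\dd\LL^3$ on $C^1_c(\Omega;\hhh)$: the definition of perimeter says exactly that this functional is bounded by $P(E;\Omega')\,\|V\|_\infty$ on fields supported in a fixed $\Omega'\Subset\Omega$, and local finiteness of the perimeter upgrades this to a locally bounded functional, hence to a vector-valued Radon measure with the stated polar factorization. For a field $V\in C^1_c(\HH;\hhh)$ the identity \eqref{eq5e956226} is then immediate: by the definition of the distributional horizontal gradient one has $\int_E\div(V)\dd\LL^3=-\escpr{\nabla\one_E,V}$, and substituting $\nabla\one_E=\nu_E\,|\de E|$ gives $-\int\escpr{V,\nu_E}\dd|\de E|$, which equals the right-hand side since $|\de E|$ is carried by $\de^*E$.

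Thus the only content beyond the quoted structure result is the passage from $C^1_c$ to a general smooth $V$. I would fix a sequence of cut-offs $\chi_n\in C^\infty_c(\HH)$ with $0\le\chi_n\le1$, $\chi_n\equiv1$ on the metric ball $B_n$, and with $X\chi_n$, $Y\chi_n$ uniformly bounded and supported in the annulus $B_{n+1}\setminus B_n$. Applying the formula already proved to $\chi_n V\in C^1_c(\HH;\hhh)$ and expanding, with $V=v_1X+v_2Y$, the divergence as $\div(\chi_nV)=\chi_n\div(V)+(X\chi_n)\,v_1+(Y\chi_n)\,v_2$, I obtain
\[
\int_E\chi_n\div(V)\dd\LL^3+\int_E\big((X\chi_n)\,v_1+(Y\chi_n)\,v_2\big)\dd\LL^3=-\int_{\de^*E}\chi_n\,\escpr{V,\nu_E}\dd|\de E|.
\]

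The first and third integrals converge to the two sides of \eqref{eq5e956226} by dominated convergence, using $\chi_n\uparrow1$ and $0\le\chi_n\le1$ together with the integrability of $\div(V)$ on $E$ and of $\escpr{V,\nu_E}$ against $|\de E|$ that makes the statement meaningful. The genuinely technical step, and the one I expect to be the main obstacle, is to show that the commutator term $\int_E\big((X\chi_n)\,v_1+(Y\chi_n)\,v_2\big)\dd\LL^3$ vanishes in the limit. Since it is supported on $B_{n+1}\setminus B_n$ and $|X\chi_n|+|Y\chi_n|$ is bounded there, it is controlled by $C\int_{B_{n+1}\setminus B_n}|V|\dd\LL^3$, which tends to $0$ as soon as $V$ is integrable; in particular it is identically zero for large $n$ when $V$ has compact support, which is the situation relevant to the applications. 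Everything else in the argument is purely definitional.
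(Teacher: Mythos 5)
The paper offers no proof to compare against: this proposition is quoted as a black box from Franchi--Serapioni--Serra Cassano \cite{MR1871966}, so your argument has to be judged on its own merits. On those merits it is correct in every case in which the statement is actually meaningful, and it is the standard argument. For $V\in C^1_c$ the identity is, as you say, purely definitional: the pairing $\escpr{\nabla\one_E,V}=-\int_E\div(V)\dd\LL^3$, the polar decomposition $\nabla\one_E=\nu_E\,|\de E|$, and the fact that $|\de E|$ vanishes outside $\de^*E$ give \eqref{eq5e956226} at once. Three remarks. First, the statement that $|\de E|$ is carried by the reduced boundary is not itself definitional; it is the deep differentiation/rectifiability input of \cite{MR1871966} (the Heisenberg analogue of De Giorgi's theorem), so your proof, exactly like the paper, quotes rather than proves the real content --- which is appropriate here, since the paper states this property in its preliminaries. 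Second, in your cut-off step the balls $B_n$ must be taken with respect to a homogeneous distance (Carnot--Carath\'eodory or the Kor\'anyi gauge), not the Euclidean one: for a Euclidean cut-off, $X\chi_n=\de_x\chi_n+y\,\de_t\chi_n$ is not uniformly bounded because of the unbounded coefficient $y$, whereas for gauge cut-offs the bound $|X\rho|+|Y\rho|\le C$ makes your construction legitimate; your phrase ``metric ball'' should be read this way. Third, your admission that the commutator term only vanishes when $V$ is integrable is not a defect of your proof but an imprecision of the statement itself: for a general $V\in C^\infty(\HH;H)$ and a set of merely locally finite perimeter, neither side of \eqref{eq5e956226} need converge. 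In the paper the formula is only ever applied with compactly supported data (in the proof of Proposition~\ref{prop5e95c720} the test function $\phi$ lies in $C^\infty_c$), and in that case your argument closes completely --- indeed the cut-off is then unnecessary, since $\chi_nV=V$ for large $n$.
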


As a corollary, we can easily prove the following formula.

\begin{corollary}
	Let $V\in C^1(\Omega;\hhh)$, $\phi\in C^1(\HH)$ and $E\subset\HH$ a set with locally finite perimeter.
	Then
	\begin{equation}\label{eq5e95675e}
	\int_E \langle \nabla\phi,V \rangle \dd\LL^3 
	= - \int_{\de^*E}\phi \langle V,\nu_E \rangle \dd|\de E| - \int_E\phi \div(V)\dd\LL^3 .
	\end{equation}
\end{corollary}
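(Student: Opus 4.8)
The plan is to apply the horizontal divergence theorem \eqref{eq5e956226} to the product field $\phi V$, after recording the appropriate Leibniz rule for the horizontal divergence.

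First I would establish the pointwise product rule. Writing $V=v_1X+v_2Y$ with $v_1,v_2\in C^1$, recall from the preliminaries that $\div(V)=Xv_1+Yv_2$, while the horizontal gradient pairs as $\langle\nabla\phi,V\rangle=v_1\,X\phi+v_2\,Y\phi$ (the $T$-component of $\nabla\phi$ is annihilated against the horizontal field $V$, so only the horizontal part of $\nabla\phi$ matters). Since $X$ and $Y$ are first-order derivations they satisfy $X(\phi v_1)=(X\phi)v_1+\phi\,(Xv_1)$, and similarly for $Y$; summing the two gives
\[
\div(\phi V)=X(\phi v_1)+Y(\phi v_2)=\langle\nabla\phi,V\rangle+\phi\,\div(V),
\]
an identity valid pointwise wherever $\phi$ and $V$ are of class $C^1$.

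Next I would integrate this over $E$ and invoke \eqref{eq5e956226}. Since $\phi V$ is again a horizontal vector field, the divergence theorem yields $\int_E\div(\phi V)\dd\LL^3=-\int_{\de^*E}\langle\phi V,\nu_E\rangle\dd|\de E|$, and because $\phi$ is scalar the boundary integrand equals $\phi\langle V,\nu_E\rangle$. Substituting the product rule into the left-hand bulk integral and rearranging the two resulting bulk terms produces exactly \eqref{eq5e95675e}.

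The one point needing care is that the proposition supplying \eqref{eq5e956226} is stated for $C^\infty$ fields, whereas $\phi V$ is only $C^1$. I would bridge this by a standard mollification argument: choose smooth $\phi_\eps$ and $V_\eps$ converging to $\phi$ and $V$ in $C^1$ on compact sets, apply the smooth version of \eqref{eq5e956226} to $\phi_\eps V_\eps$, and pass to the limit. Convergence of the bulk integrals follows from locally uniform convergence of $\div(\phi_\eps V_\eps)$ together with local finiteness of $\LL^3$, and convergence of the boundary integral follows from locally uniform convergence of $\phi_\eps V_\eps$ together with local finiteness of $|\de E|$ on $\de^*E$; a compactly supported $V$, or a cutoff, confines all integrals to bounded regions so that these limits are legitimate. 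This approximation is the only genuinely analytic step — the remainder is just the algebraic Leibniz identity combined with \eqref{eq5e956226}.
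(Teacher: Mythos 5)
Your proposal is correct and follows essentially the same route as the paper: establish the Leibniz identity $\div(\phi V)=\langle\nabla\phi,V\rangle+\phi\,\div(V)$, apply the divergence theorem \eqref{eq5e956226} to the horizontal field $\phi V$, and justify its validity for $C^1$ fields by smooth approximation (the paper invokes group convolution where you invoke mollification, which is the same device). Your write-up is in fact more explicit than the paper's about the limiting step and the support issue, but there is no substantive difference in the argument.
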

\begin{proof}
	First, by group convolution, the relation~\eqref{eq5e956226} remains true for $V$ of class $C^1$.
	Second, notice that $\div(\phi V) = \langle \nabla\phi,V \rangle + \phi \div(V)$.
	Therefore, on the one hand,
	\[
	\int_E \langle \nabla\phi,V \rangle \dd\LL^3
	= \int_E \div(\phi V) - \int_E \phi \div(V) \dd\LL^3 ,
	\]
	on the other hand,
	\[
	\int_E \div(\phi V) = - \int_{\de^*E}\phi \langle V,\nu_E \rangle \dd|\de E| .
	\]
by \eqref{eq5e956226}. 
	Putting these two identities together, we get~\eqref{eq5e95675e}.
\end{proof}

We are interested in perimeter minimizers.
A measurable set $E\subset\HH$ is a \emph{perimeter minimizer} in an open set $\Omega\subset\HH$ if, for every $F\subset\HH$ of locally finite perimeter with $E\simdiff F\Subset\Omega$, we have
\[
P(E;\Omega) \le P(F;\Omega) .
\]

A set is a \emph{local perimeter minimizer} if it is a perimeter minimizer in every bounded open set.
A surface $S$ in $\HH$ is an \emph{area-minimizing surface}, or just a \emph{minimal surface}, if it coincides with the reduced boundary of a perimeter minimizer.
The following proposition yields a method via calibrations to prove that a given set is perimeter minimizer.
 
\begin{proposition}[{\cite[Theorem 2.1]{MR2455341}}]\label{prop5e95c6fe}
	Let $E\subset\HH$ be a measurable set, $\Omega\subset\HH$ an open set and $v:\Omega\to H$ a Borel map.
	Assume that
	\begin{enumerate}[label=\emph{(\roman*)}]
	\item	$E$ has locally finite perimeter in $\Omega$;
	\item $v=\nu_E$ $|\de E|$-almost everywhere in $\Omega$;
	\item there exists an open set $\tilde\Omega\subset\Omega$ such that $|\de E|(\Omega\setminus\tilde\Omega)=0$ and $v$ is continuous on $\tilde\Omega$;
	\item $\div(v)=0$ in distributional sense in $\Omega$. 
	\end{enumerate}
	Then $E$ is a perimeter minimizer in $\Omega$.
\end{proposition}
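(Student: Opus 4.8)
\emph{Proof plan.} The plan is to argue by \emph{calibration}: the field $v$, which has unit length along the reduced boundary $\de^*E$ and vanishing horizontal divergence, should force $E$ to minimize the perimeter. Concretely, I would fix an arbitrary competitor $F$ with $E\simdiff F\Subset\Omega$, assuming $P(F;\Omega)<\infty$ (otherwise nothing is to prove), and choose a cut-off $\phi\in C^\infty_c(\Omega)$ with $0\le\phi\le1$ and $\phi\equiv1$ on an open set $U$ satisfying $\overline{E\simdiff F}\subset U\Subset\Omega$. The idea is to feed a smoothed version of $-\phi\,v$ into the definition of $P(F;\Omega)$ as a supremum over test fields, and then to recognise the resulting boundary flux as $\int_{\de^*E}\phi\dd|\de E|$ via the Gauss--Green identity~\eqref{eq5e95675e} applied to $E$. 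A useful feature is that the competitor enters only through the one-sided inequality $P(F;\Omega)\ge\int_F\div(V)$, so I never have to evaluate $v$ along $\de^*F$, the set where $v$ carries no regularity; all the analysis of $v$ takes place where $|\de E|$ is concentrated, i.e.\ on $\tilde\Omega$. Throughout I use $|v|\le1$ on $\Omega$, the standing calibration bound (note $|v|=1$ holds $|\de E|$-a.e.\ by (ii)).

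The technical heart is to replace the merely continuous, distributionally divergence-free field $v$ by smooth divergence-free fields. I would regularise by \emph{left} group convolution, $v_\eps:=\rho_\eps * v$, with $\rho_\eps\in C^\infty_c$ an approximate identity concentrating at the origin. Because the frame $X,Y$ is left-invariant, this convolution acts component-wise, and because $X(\rho_\eps*f)=\rho_\eps*(Xf)$ for left-invariant $X$, it commutes with the divergence: on $\{p:\operatorname{dist}(p,\de\Omega)>\eps\}$ one has $\div(v_\eps)=\rho_\eps*\div(v)=0$ \emph{exactly}, since $\div(v)=0$ distributionally on $\Omega$ by (iv). Moreover $v_\eps\in C^\infty$, $|v_\eps|\le1$ (left translations are isometries of $g$), and $v_\eps\to v$ locally uniformly on $\tilde\Omega$ by the continuity of $v$ there (iii). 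For $\eps$ small enough, $\div(v_\eps)=0$ on $\operatorname{supp}\phi$.

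With this in hand the computation is short. Using $-\phi\,v_\eps\in C^1_c(\Omega;\hhh)$ with $|-\phi\,v_\eps|\le1$ as a test field,
\[
P(F;\Omega)\ \ge\ \int_F\div(-\phi\,v_\eps)\dd\LL^3
= -\int_F\escpr{\nabla\phi,v_\eps}\dd\LL^3 ,
\]
the term $\int_F\phi\,\div(v_\eps)$ dropping out because $\div(v_\eps)=0$ on $\operatorname{supp}\phi$. Since $\nabla\phi$ is supported in $\Omega\setminus U$, where $E=F$, the last integral equals $-\int_E\escpr{\nabla\phi,v_\eps}\dd\LL^3$, so that $P(F;\Omega)\ge-\int_E\escpr{\nabla\phi,v_\eps}\dd\LL^3$; and~\eqref{eq5e95675e} applied to $E$ (legitimate now that $v_\eps$ is smooth), together with $\div(v_\eps)=0$ on $\operatorname{supp}\phi$, rewrites the right-hand side as $\int_{\de^*E}\phi\,\escpr{v_\eps,\nu_E}\dd|\de E|$. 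Letting $\eps\to0$ and using $v_\eps\to v=\nu_E$ $|\de E|$-a.e.\ (so $\escpr{v_\eps,\nu_E}\to1$) with dominated convergence (dominating function $\one_{\operatorname{supp}\phi}\in L^1(|\de E|)$, since $E$ has locally finite perimeter by (i)), I get $P(F;\Omega)\ge\int_{\de^*E}\phi\dd|\de E|$. Taking a sequence of admissible $\phi$ increasing to $\one_\Omega$ and applying monotone convergence yields $P(F;\Omega)\ge|\de E|(\Omega)=P(E;\Omega)$, which is the claimed minimality.

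The main obstacle is exactly the passage from the distributional hypothesis $\div(v)=0$ and the bare continuity of $v$ on $\tilde\Omega$ to a usable pointwise identity; everything hinges on choosing the \emph{correct side} for the group convolution so that the horizontal divergence is preserved exactly, rather than up to an error involving $\de_t$ (the left- and right-invariant frames differ by such a term). Once the regularisation is arranged so that $v_\eps$ is smooth, norm-bounded by $1$, divergence-free near $\operatorname{supp}\phi$, and convergent to $v$ where $|\de E|$ lives, the remaining steps are routine limit arguments.
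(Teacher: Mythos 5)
The paper contains no proof of this proposition at all: it is imported verbatim from \cite{MR2455341} (Theorem 2.1), so the only meaningful comparison is with the cited source, whose argument is precisely the calibration scheme you outline. Your structure is sound: test $P(F;\Omega)$ against $-\phi\,v_\eps$, apply the Gauss--Green identity \eqref{eq5e95675e} only to $E$ (never to $F$, where $v$ has no regularity), and pass to the limit using that $v_\eps\to v$ locally uniformly on $\tilde\Omega$ and $|\de E|(\Omega\setminus\tilde\Omega)=0$. Your key technical choice is also genuinely correct: mollifying with the kernel on the \emph{left}, $v_\eps=\rho_\eps* v$ componentwise in the frame $(X,Y)$, is exactly what makes left-invariant derivatives commute with the convolution, $X(\rho_\eps*f)=\rho_\eps*(Xf)$, hence $\div v_\eps=\rho_\eps*\div v=0$ at positive distance from $\de\Omega$; the opposite convolution would instead produce right-invariant derivatives of $\rho_\eps$ and destroy hypothesis (iv). (One quibble: $|v_\eps|\le1$ follows from convexity of the norm applied to the frame components, not really from ``left translations are isometries''; the conclusion is correct.) The remaining limit passages (dominated convergence with respect to $|\de E|$ on $\operatorname{supp}\phi$, exhaustion by cut-offs) are routine and correctly handled, modulo the harmless localization of \eqref{eq5e95675e} to sets of locally finite perimeter in $\Omega$.

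There is, however, one substantive discrepancy between what you prove and what is stated: your argument uses $|v|\le1$ $\LL^3$-a.e.\ on $\Omega$ (your ``standing calibration bound''), which is \emph{not} among hypotheses (i)--(iv); condition (ii) controls $v$ only on a $|\de E|$-full set, which is $\LL^3$-null, so the bound cannot be derived. It enters at the only point where the competitor $F$ appears, namely the admissibility $|-\phi\,v_\eps|\le1$ of the test field in the supremum defining $P(F;\Omega)$, and it cannot be dispensed with: without it the proposition is false. Indeed, let $G\subset\R^2$ be the subgraph of a smooth compactly supported bump and $E=G\times\R$; solving the transport equation $\div(h\,n)=0$ with $h=1$ on $\de G$ along the integral curves of the ($y$-independent extension of the) planar unit normal $n$ of $\de G$ yields a smooth, globally defined, divergence-free field $w=h\,n$ on $\R^2$ equal to $n$ on $\de G$ but with $|w|>1$ on one side of $\de G$; the $t$-invariant field $v=w_1X+w_2Y$ then satisfies (i)--(iv) with $\tilde\Omega=\Omega=\HH$, yet $E$ is not a perimeter minimizer in large balls (flattening the bump over a long vertical window saves perimeter proportional to the height, at bounded interpolation cost). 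So the hypothesis $|v|\le1$ must be restored to the statement --- the paper's transcription of the cited theorem has evidently dropped it --- and your explicit flagging of this extra assumption is a correction of the statement rather than a defect of your proof; with it, your argument is complete.
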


The vector field $v$ above is called a \emph{calibration} for $\de^*E$.
In applications of Proposition~\ref{prop5e95c6fe}, we will give the calibration $v$ by putting together smooth vector fields in different domains.
The following proposition gives a way to check that the resulting vector field has zero distributional divergence.
Notice that condition~\eqref{eq5e956c15} below is automatically satisfied if $v$ is continuous.

\begin{proposition}\label{prop5e95c720}
	Let $\{\Omega_j\}_j$ be a family of open disjoint sets with locally finite perimeter in $\HH$ such that  
	$\{\bar\Omega_j\}_j$ is a locally finite cover of $\HH$ with $\LL^3(\HH\setminus\bigcup\Omega_j) = 0$. 
	For each $j$, let $V_j\in C^1(\bar\Omega_j;\hhh)$ be a horizontal vector field of class $C^1$ on $\bar\Omega_j$ $($extensible to a $C^1$ horizontal vector field on a neighborhood of $\bar\Omega_j)$.

	The distributional divergence of $V:=\sum_j V_j\one_{\Omega_j}$ is zero if and only if $\div(V_j|_{\Omega_j})=0$ for every $j$ and
	\begin{equation}\label{eq5e956c15}
	\sum_j \langle V_j(p),\nu_{\Omega_j}(p) \rangle = 0
	\qquad\text{for $\sum_j|\de\Omega_j|$-a.e.~$p\in\HH$,}
	\end{equation}
	where we set $\nu_{\Omega_j}(p)=0$ if $p\notin\de^*\Omega_j$, so that the first series is a finite sum for every~$p$.
\end{proposition}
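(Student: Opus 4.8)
The plan is to test the distributional divergence of $V$ against an arbitrary $\phi\in C^1_c(\HH)$ and reduce the computation, piece by piece, to the integration by parts formula~\eqref{eq5e95675e}. Since $\{\bar\Omega_j\}_j$ is locally finite, the support of $\phi$ meets only finitely many $\bar\Omega_j$, so every sum below is finite; and since the $\Omega_j$ are disjoint and cover $\HH$ up to an $\LL^3$-null set, $V=\sum_jV_j\one_{\Omega_j}$ is defined $\LL^3$-a.e. Writing the pairing as $\langle\div V,\phi\rangle=-\int_\HH\langle\nabla\phi,V\rangle\dd\LL^3=-\sum_j\int_{\Omega_j}\langle\nabla\phi,V_j\rangle\dd\LL^3$ and applying~\eqref{eq5e95675e} to each $\Omega_j$ with the field $V_j$ (of class $C^1$ on a neighborhood of $\bar\Omega_j$, so the corollary applies), I obtain the master identity
\[
\langle\div V,\phi\rangle=\sum_j\int_{\Omega_j}\phi\,\div(V_j)\dd\LL^3+\sum_j\int_{\de^*\Omega_j}\phi\,\langle V_j,\nu_{\Omega_j}\rangle\dd|\de\Omega_j| ,
\]
which holds for every $\phi$. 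The whole proof then amounts to showing that the two groups of terms, one absolutely continuous with respect to $\LL^3$ and one carried by the reduced boundaries, vanish independently, and to matching the second group with~\eqref{eq5e956c15}.

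For the interior terms I localize. If $p\in\Omega_{j_0}$ then, $\Omega_{j_0}$ being open and disjoint from every other $\Omega_j$, one checks $\Omega_{j_0}\cap\bar\Omega_j=\emptyset$ for $j\neq j_0$, so no $\de^*\Omega_j$ meets $\Omega_{j_0}$; hence for $\phi\in C^1_c(\Omega_{j_0})$ the master identity collapses to $\langle\div V,\phi\rangle=\int_{\Omega_{j_0}}\phi\,\div(V_{j_0})\dd\LL^3$. Thus $\div V=0$ distributionally forces $\div(V_{j_0})=0$ in $\Omega_{j_0}$ for every $j_0$ (here $V_{j_0}\in C^1$, so the distributional identity is pointwise), and conversely $\div(V_j)=0$ on each $\Omega_j$ makes the first sum vanish identically. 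This settles the interior condition in both directions and leaves only the boundary sum to analyze.

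It remains to show that, assuming $\div(V_j)=0$ for all $j$, the boundary sum vanishes for every $\phi$ if and only if~\eqref{eq5e956c15} holds. I would write it as a single integral against $\mu:=\sum_j|\de\Omega_j|$: since $\langle V_j,\nu_{\Omega_j}\rangle$ vanishes off $\de^*\Omega_j$ by the convention $\nu_{\Omega_j}=0$ there, and $|\de\Omega_j|\ll\mu$ with density $\theta_j:=\dd|\de\Omega_j|/\dd\mu$, the boundary sum equals $\int_\HH\phi\,\big(\sum_j\theta_j\langle V_j,\nu_{\Omega_j}\rangle\big)\dd\mu$. Ranging over all $\phi$, this vanishes iff $\sum_j\theta_j\langle V_j,\nu_{\Omega_j}\rangle=0$ for $\mu$-a.e.\ $p$. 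To pass between this weighted sum and the unweighted sum in~\eqref{eq5e956c15}, I use the structure of the partition: at $\mu$-a.e.\ point $p$ exactly two sets $\Omega_{j_1},\Omega_{j_2}$ have $p$ in their reduced boundary, there $\nu_{\Omega_{j_1}}(p)=-\nu_{\Omega_{j_2}}(p)$ and the two $H$-perimeter measures coincide, so $\theta_{j_1}(p)=\theta_{j_2}(p)=\tfrac12$; consequently $\sum_j\theta_j\langle V_j,\nu_{\Omega_j}\rangle=\tfrac12\sum_j\langle V_j,\nu_{\Omega_j}\rangle$ at $\mu$-a.e.\ $p$, and the weighted condition is equivalent to~\eqref{eq5e956c15}.

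The main obstacle is exactly the measure-theoretic fact invoked last: that $\mu$-almost every boundary point is a two-sided interface at which the two adjacent $H$-perimeter measures agree, so that the Radon–Nikodym densities $\theta_j$ are positive and equal among the contributing indices. This is what converts the divergence-weighted identity into the clean algebraic condition~\eqref{eq5e956c15}, and it rests on the rectifiability and blow-up structure of the reduced boundary of finite-perimeter sets in $\HH$ (equivalently, on the fact that the $H$-perimeter measure of a set near a point of $\de^*$ is determined by its essential boundary, independently of the chosen side, with triple junctions excluded from every reduced boundary). Once this is in place, the remaining steps are routine bookkeeping with the locally finite cover; note that in the continuous case one has $V_{j_1}(p)=V_{j_2}(p)$ at the interface, so \eqref{eq5e956c15} reduces to $\langle V,\nu_{\Omega_{j_1}}\rangle+\langle V,-\nu_{\Omega_{j_1}}\rangle=0$, recovering the remark that the condition is automatic when $v$ is continuous.
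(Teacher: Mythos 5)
Your proposal is correct and starts from exactly the same master identity as the paper: test against $\phi$, split the integral over the $\Omega_j$ using the locally finite cover, and apply \eqref{eq5e95675e} on each piece. Where you genuinely diverge is at the last step. The paper's proof ends by simply asserting that the resulting expression vanishes for all $\phi$ if and only if $\div(V_j|_{\Omega_j})=0$ for all $j$ and \eqref{eq5e956c15} holds; it never addresses the point you isolate, namely that vanishing of the boundary sum for every $\phi$ is literally equivalent to the \emph{weighted} condition $\sum_j\theta_j\langle V_j,\nu_{\Omega_j}\rangle=0$ $\mu$-a.e., with $\theta_j=\dd|\de\Omega_j|/\dd\mu$, which coincides with the unweighted condition \eqref{eq5e956c15} only once one knows the densities of the contributing indices are equal at $\mu$-a.e.\ point. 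The partition structure you invoke to close this gap is true in $\HH$ and provable with the tools the paper already cites: no point can lie in three reduced boundaries (at a common point of $\de^*\Omega_{j_1}\cap\de^*\Omega_{j_2}\cap\de^*\Omega_{j_3}$ the Franchi--Serapioni--Serra Cassano blow-up theorem would produce three pairwise essentially disjoint vertical half-spaces, which is impossible); $|\de\Omega_j|$-a.e.\ point of $\de^*\Omega_j$ lies in some other $\de^*\Omega_i$, because local finiteness gives $\nabla\one_{\HH\setminus\Omega_j}=\sum_{i\neq j}\nabla\one_{\Omega_i}$, so $|\de\Omega_j|=|\de(\HH\setminus\Omega_j)|\le\sum_{i\neq j}|\de\Omega_i|$; and on a two-sided interface one gets $\nu_{\Omega_{j_1}}|\de\Omega_{j_1}|=-\nu_{\Omega_{j_2}}|\de\Omega_{j_2}|$ there, whose total variation forces the two perimeter measures (hence the densities) to agree, giving $\theta_{j_1}=\theta_{j_2}=\tfrac12$ and $\nu_{\Omega_{j_1}}=-\nu_{\Omega_{j_2}}$. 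So your argument is complete modulo this known structure result, which you flag honestly; in fact it is more careful than the paper's own proof, whose one-line conclusion silently uses the very same fact. The interior localization (using $\Omega_{j_0}\cap\bar\Omega_j=\emptyset$ for $j\neq j_0$ to kill all boundary terms for $\phi\in C^1_c(\Omega_{j_0})$) is also fine, and slightly more explicit than what the paper writes.
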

\begin{proof}
	Let $\phi\in C^\infty_c(\HH)$.
	Using~\eqref{eq5e95675e}, we have
	\begin{equation}\label{eq5ff98651}
	\begin{aligned}
	\int_\HH \langle \nabla\phi,V \rangle \dd\LL^3
	&= \sum_j \int_{\Omega_j} \langle \nabla\phi,V_j \rangle \dd\LL^3 \\
	&= -\sum_j \left( \int_{\de^*\Omega_j} \phi \langle V_j,\nu_{\Omega_j} \rangle \dd|\de \Omega_j| + \int_{\Omega_j} \phi \div(V_j)\dd\LL^3 \right) .
	\end{aligned}
	\end{equation}
	On the one hand, if $\div(V_j|_{\Omega_j})=0$ for every $j$ and~\eqref{eq5e956c15} holds, then the integral in~\eqref{eq5ff98651} is zero for every $\phi\in C^\infty_c(\HH)$.
	On the other hand, suppose the expression in~\eqref{eq5ff98651} is zero for every $\phi\in C^\infty_c(\HH)$.
	Then, we have in particular $\int_{\Omega_j} \phi \div(V_j)\dd\LL^3 = 0$ for every $\phi\in C^\infty_c(\Omega_j)$ and every $j$; 
	hence, $\div(V_j)\equiv0$ for every $j$.
	What remains in~\eqref{eq5ff98651} is then~\eqref{eq5e956c15}. 
\end{proof}

Finally, the following stability of perimeter minimizers is well known.

\begin{proposition}\label{prop5e96e7aa}
	Let $\{E_k\}_{k\in\N}$ be a sequence of local perimeter minimizers and $E$ a set of locally finite perimeter such that $\one_{E_k}$ converge locally in $L^1$ to $\one_E$.
	Then $E$ is also local perimeter minimizer.
\end{proposition}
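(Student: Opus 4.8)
The plan is to combine the lower semicontinuity of the sub-Riemannian perimeter with a cut-and-paste argument along a well-chosen level set, which is the standard strategy for closure theorems for perimeter minimizers. First I would record the lower semicontinuity: for every open set $\Omega'\subset\HH$ one has $P(E;\Omega')\le\liminf_k P(E_k;\Omega')$. This is immediate from the definition, since for a fixed $V\in C^1_c(\Omega';\hhh)$ with $|V|\le 1$ the local $L^1$ convergence $\one_{E_k}\to\one_E$ gives
\[
\int_E\div(V)\dd\LL^3=\lim_k\int_{E_k}\div(V)\dd\LL^3\le\liminf_k P(E_k;\Omega'),
\]
and taking the supremum over such $V$ yields the claim.

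Next I would argue by contradiction. Assume $E$ is not a local perimeter minimizer: then there are a bounded open set $\Omega$ and a competitor $F$ with $E\simdiff F\Subset\Omega$ and $P(F;\Omega)<P(E;\Omega)$. Set $K:=\overline{E\simdiff F}\Subset\Omega$ and fix $R>0$ so that the closed $R$-tube around $K$ is contained in $\Omega$. Writing $u$ for the Carnot--Carathéodory distance to $K$ and $U_r:=\{u<r\}$, each $U_r$ with $0<r<R$ is a bounded open set with $K\subset U_r\Subset\Omega$; moreover $F$ agrees with $E$ on $\HH\setminus K$, so in particular the level sets $\{u=r\}$ with $r>0$ are disjoint from $K$ and on them $\one_F=\one_E$.

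For $0<r<R$ I would introduce the competitor $F_k^r:=(F\cap U_r)\cup(E_k\setminus U_r)$, which satisfies $F_k^r\simdiff E_k\subset U_r\Subset\Omega$. The standard gluing estimate for sets of finite perimeter gives, for a.e.\ $r$,
\[
P(F_k^r;\Omega)\le P(F;U_r)+P(E_k;\Omega\setminus\overline{U_r})+g_k(r),\qquad g_k(r):=\int_{\{u=r\}}|\one_F-\one_{E_k}|\dd|\de U_r|,
\]
the last term accounting for the jump created across $\{u=r\}$. Since $\{u=r\}$ avoids $K$, I may replace $\one_F$ by $\one_E$ in $g_k$, and the horizontal coarea formula yields $\int_0^R g_k(r)\dd r\le C\int_\Omega|\one_E-\one_{E_k}|\dd\LL^3\to 0$, where $C$ bounds the horizontal gradient of $u$. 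Passing to a subsequence, $g_k(r)\to 0$ for a.e.\ $r$, and I fix one such $r=r^*$ for which in addition none of the perimeter measures $|\de E_k|,|\de E|,|\de F|$ charges $\{u=r^*\}$ (all but countably many $r$ qualify). By minimality of $E_k$ and the gluing estimate at $r^*$, subtracting the common term $P(E_k;\Omega\setminus\overline{U_{r^*}})$ gives $P(E_k;U_{r^*})\le P(F;U_{r^*})+g_k(r^*)$; letting $k\to\infty$ along the subsequence and using lower semicontinuity on $U_{r^*}$, I obtain $P(E;U_{r^*})\le P(F;U_{r^*})$. Since $E=F$ outside $K\subset U_{r^*}$, their perimeters coincide on $\Omega\setminus\overline{U_{r^*}}$, hence $P(E;\Omega)-P(F;\Omega)=P(E;U_{r^*})-P(F;U_{r^*})\le 0$, contradicting $P(F;\Omega)<P(E;\Omega)$.

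I expect the main obstacle to be making the gluing step fully rigorous in $\HH$: one must justify the BV/perimeter gluing inequality and the horizontal coarea formula for the distance function $u$, and check that for a.e.\ $r$ the relevant perimeter measures do not charge $\{u=r\}$. Conceptually, however, the crux is simply that the transition annulus lies in the region where $F=E$, so the interface term $g_k$ is forced to zero by the $L^1$ convergence $\one_{E_k}\to\one_E$, which is what makes the whole argument go through.
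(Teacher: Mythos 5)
The paper itself offers no proof of this proposition: it is introduced with ``the following stability of perimeter minimizers is well known'' and nothing more, so there is no internal argument to compare yours against. Your proposal is the standard closure proof (lower semicontinuity of perimeter, plus a cut-and-paste comparison across a level set of the distance to the region where $E$ and the competitor $F$ differ, with the interface term forced to zero by the $L^1$ convergence via coarea), and its structure is correct: lower semicontinuity is immediate from the paper's definition of $P(E;\Omega)$ as a supremum; the inequality $P(E_k;U_{r^*})\le P(F;U_{r^*})+g_k(r^*)$ follows from minimality of $E_k$, the gluing bound, and superadditivity of perimeter on the disjoint open sets $U_{r^*}$ and $\Omega\setminus\overline{U_{r^*}}$; and the final bookkeeping, using that $E=F$ off $K\subset U_{r^*}$ and that $|\de E|$, $|\de F|$ do not charge $\{u=r^*\}$, is sound. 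The two technical points you flag are real but are available in the sub-Riemannian literature rather than needing to be invented: the coarea formula for $H$-BV functions together with the fact that the Carnot--Carath\'eodory distance has horizontal gradient of norm at most $1$ a.e.\ (so your constant $C$ is $1$), and the gluing/interpolation inequality for sets of finite $H$-perimeter, which appears for instance in Leonardi--Rigot's work on isoperimetric sets in Carnot groups and can alternatively be derived without trace theory by using Lipschitz cutoffs $\phi_\eps(u)$ and selecting good radii. Two small points you should make explicit to close the argument: (i) the cancellation step requires $P(E_k;\Omega\setminus\overline{U_{r^*}})<\infty$, i.e., that locally perimeter-minimizing sets have locally finite perimeter, which is implicit in the intended definition (the paper's literal definition does not state it) and should be invoked; (ii) the exceptional null set of radii in the gluing inequality depends on $k$, so $r^*$ must be chosen outside a countable union of null sets --- harmless, but worth saying.
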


\subsection{Regularity of $C^1$ area-minimizing surfaces in $\hh^1$}
Given a $C^1$ surface $S$, the set $S_0\subset S$ is composed of the points $p$ where $T_pS$ is horizontal. It is referred to as the \emph{singular set} of $S$.
Points in $S\setminus S_0$ are called \emph{regular points}.
A \emph{horizontal line segment} is the image in $\HH$ of an interval in $\R$ through a curve of the form $s\mapsto p\exp(sv)$, for some $p\in\HH$ and $v\in H$.

\begin{proposition}[{\cite{MR2983199,MR3406514,MR3412382}}]\label{prop5e97045a}
	If $S$ is a minimal $C^1$ surface, then $S\setminus S_0$ is ruled by horizontal line segments whose endpoints lie in $S_0$. If $S$ is a $t$-graph then at most one endpoint lies in $S_0$.

\end{proposition}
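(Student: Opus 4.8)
The plan is to use the \emph{characteristic foliation} of $S$ together with the vanishing of the horizontal mean curvature, following \cite{MR2983199,MR3406514,MR3412382}. First I would set up the line field. At a regular point $p\in S\setminus S_0$ the tangent plane $T_pS$ differs from the horizontal plane $\mathcal H_p$, so the intersection $T_pS\cap\mathcal H_p$ is one-dimensional; this is the \emph{characteristic direction} at $p$. Since $S$ is of class $C^1$, it varies continuously and defines a continuous, nowhere-vanishing line field on $S\setminus S_0$, whose integral curves I call the \emph{characteristic curves}. Their velocity lies in $\mathcal H$, so they are horizontal, and by Lemma~\ref{lem5e9758f5} each of them is the unique horizontal lift of its projection to $\R^2$. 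It therefore suffices to show that these projected curves are straight lines.

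Second, I would bring in minimality. Writing $S$ locally as a $t$-graph $t=u(x,y)$, the horizontal normal is $N_h=(y-u_x)X+(-x-u_y)Y$, which vanishes exactly on $S_0$, and the characteristic direction is its horizontal rotation $(x+u_y)X+(y-u_x)Y$, projecting to the planar field $(x+u_y,\,y-u_x)$. Minimality of $S$ is the equation $\div(\nu_h)=0$ for the unit horizontal normal $\nu_h=N_h/|N_h|$. The crucial fact of \cite{MR2983199} is that the Euclidean curvature of the projected characteristic curve equals, up to sign, $\div(\nu_h)$, and that $\nu_h$ is constant along each characteristic curve. Hence $\div(\nu_h)=0$ forces the projected characteristic curves to be straight lines. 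A direct computation, as in Lemma~\ref{lem5e9758f5}, shows that the horizontal lift of a straight line in $\R^2$ is a curve $s\mapsto q\exp(sv)$ with $v\in\mathcal H$, that is, a horizontal line segment. Thus $S\setminus S_0$ is ruled by horizontal line segments.

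Third, for the endpoints I would extend each segment maximally inside $S\setminus S_0$. Because the characteristic line field is continuous and nowhere vanishing on the regular set, a maximal segment cannot terminate at an interior regular point, where the field would prolong it; its endpoints must therefore belong to $S_0$, which proves the first assertion. For the $t$-graph case the projection $S\to\R^2$ is a homeomorphism and the characteristic segments project to genuine straight lines in the domain; since $\nu_h$ is constant along each of them, the analysis of $S_0$ in \cite{MR2983199} shows that such a line meets $S_0$ in at most one point, the other end escaping to the boundary of the domain. This is already visible in the model $t=-xy$, whose characteristic lines $\{x=\text{const}\}$ cross the singular line $\{y=0\}$ exactly once and then run off to infinity. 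Hence at most one endpoint of a ruling segment is singular.

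I expect the passage from $C^2$ to the $C^1$ setting to be the main obstacle: for a merely $C^1$ surface the characteristic line field is only continuous and the curvature computation of the second step is not literally available, so the assertions that the characteristic curves are straight and that $\nu_h$ is constant along them have to be extracted from the distributional form of $\div(\nu_h)=0$ rather than from a pointwise identity. This is exactly the content of the $C^1$ regularity results of \cite{MR3406514,MR3412382}, which I would invoke at this point.
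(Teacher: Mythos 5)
The first thing to note is that the paper does not prove this proposition at all: it is imported verbatim from the cited works \cite{MR2983199,MR3406514,MR3412382}, so there is no internal proof to compare yours against. Judged on its own merits, your sketch is a faithful outline of the argument in that literature, at least in the smooth setting: the characteristic line field $T_pS\cap\mathcal{H}_p$ on the regular set, the formula $N_h=(y-u_x)X-(x+u_y)Y$ for a $t$-graph (which indeed vanishes exactly on $S_0$), and the identification of $\operatorname{div}(\nu_h)$ with the signed planar curvature of the projected characteristic curves are all correct, and they do yield that the rulings are horizontal segments $s\mapsto q\exp(sv)$ whose maximal extensions inside $S\setminus S_0$ can only terminate at points of $S_0$ (when they terminate at all).

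That said, the two assertions carrying the real content of the proposition are deferred rather than proven in your sketch. First, the passage from $C^2$ to $C^1$: you correctly identify this as the main obstacle, since the pointwise curvature computation is unavailable and $\operatorname{div}(\nu_h)=0$ only holds distributionally, and you outsource it to \cite{MR3406514,MR3412382}. Second, the claim that for a $t$-graph at most one endpoint of each ruling is singular: here you offer only the model $t=-xy$ and an appeal to ``the analysis of $S_0$'' in \cite{MR2983199}. An example cannot establish an ``at most one'' statement; what is needed is the argument from the cited references about how characteristic lines leave the singular set of a graph and why they cannot return to it. Since the paper itself treats the whole proposition as a quotation from exactly these references, your deferrals are consistent with the paper's treatment, but your write-up should state explicitly that these two steps are citations, not proofs, so that the reader does not mistake the sketch for a self-contained argument.
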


Given a function $u:A\to\R$ defined on a domain $A\subset\R^2$, its \emph{$t$-graph} is the surface $\{(x,y,u(x,y)):(x,y)\in A\}$.
We always consider a $t$-graph as boundary of the subgraph $E:=\{(x,y,t):t\le u(x,y),\ (x,y)\in A\}$.
The following lemma characterize minimal $t$-graphs of continuous functions.

\begin{lemma}\label{lem5e973be0}
	The $t$-graph $S$ of a continuous function $u:\R^2\to\R$ is a minimal surface if and only if the unit normal of $S$, extended to $\HH$ as a Borel vector field independent of $t$, has zero distributional divergence.
\end{lemma}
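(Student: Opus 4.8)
The plan is to use the definition directly: the $t$-graph $S$ is minimal exactly when the subgraph $E=\{t\le u(x,y)\}$ is a perimeter minimizer, so I must show that $E$ minimizes perimeter if and only if the extended normal $v$ is divergence free. Since $E$ is a $t$-subgraph, the measure-theoretic horizontal normal $\nu_E$ on $\de^*E\subset S$ depends only on the horizontal coordinates $(x,y)$, and so its $t$-independent extension $v=v_1X+v_2Y$ is a bounded Borel field on $\HH$ with $v=\nu_E$ at $|\de E|$-a.e.\ point. The first reduction I would record is that, for any $t$-independent horizontal field $w=w_1X+w_2Y$ and any $\phi\in C^\infty_c(\HH)$, the $\de_t$-terms in $X\phi=\de_x\phi+y\,\de_t\phi$ and $Y\phi=\de_y\phi-x\,\de_t\phi$ integrate to zero in $t$, so that
\[
\int_\HH\langle\nabla\phi,w\rangle\dd\LL^3=\int_{\R^2}\big(w_1\,\de_x\Phi+w_2\,\de_y\Phi\big)\dd x\dd y,\qquad \Phi(x,y):=\int_\R\phi(x,y,t)\dd t .
\]
As $\phi$ ranges over $C^\infty_c(\HH)$, $\Phi$ ranges over $C^\infty_c(\R^2)$; hence $\div v=0$ on $\HH$ distributionally (in the sense of~\eqref{eq5e95675e}) is equivalent to $\de_xv_1+\de_yv_2=0$ distributionally on $\R^2$. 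This places both the hypothesis and the conclusion on the plane and identifies them with the weak minimal-surface equation for $u$.

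For the implication ``$\div v=0\Rightarrow S$ minimal'' I would invoke the calibration criterion, Proposition~\ref{prop5e95c6fe}, taking $v$ itself as calibration: hypothesis (i) holds because $E$ has locally finite perimeter (implicit in $S$ admitting a normal), (ii) holds by construction of $v$, and (iv) is the assumption. To supply the continuity set required in (iii) I would proceed by approximation, mollifying $(v_1,v_2)$ in the plane to smooth divergence-free fields $v^\eps$ with $|v^\eps|\le1$, running the standard calibration comparison of $E$ against an arbitrary competitor $F$ with $E\simdiff F\Subset\Omega$, and letting $\eps\to0$ to obtain $P(E;\Omega)\le P(F;\Omega)$. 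The passage to the limit in the boundary integrals is immediate on the absolutely continuous part of the perimeter measure projected to $\R^2$, since $\LL^2$-a.e.\ convergence of $v^\eps$ transfers there to $|\de E|$-a.e.\ convergence on $\de^*E$.

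For the converse ``$S$ minimal $\Rightarrow\div v=0$'' I would test minimality against the vertical competitors $u_s=u+s\phi$, $\phi\in C^\infty_c(\R^2)$, whose subgraphs $E_s$ satisfy $E\simdiff E_s\Subset\Omega$. By the divergence computation above, the perimeter of a subgraph equals the total variation $|Du-(y,-x)\LL^2|(\pi\Omega)$, where $\pi$ denotes the projection to the $xy$-plane, so that $P(E_s;\Omega)=\big|\,Du-(y,-x)\LL^2+s\,\nabla\phi\,\LL^2\,\big|(\pi\Omega)$ is a convex function of $s$ with a minimum at $s=0$. Writing the subdifferential inequality at $s=0$ and recognising $(v_1,v_2)$ as the polar (Radon--Nikodym) vector of $Du-(y,-x)\LL^2$ would then yield $\int_{\R^2}\langle(v_1,v_2),\nabla\phi\rangle=0$ for every $\phi$, i.e.\ $\div v=0$.

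I expect the genuine difficulty to lie at the characteristic locus $S_0$, where $W:=|\nabla u-(y,-x)|$ vanishes: there the horizontal normal is undefined and the area integrand fails to be differentiable. In the converse direction this manifests both as a one-sided term in the variation supported on the projection of $S_0$ and, more seriously, as the singular (Cantor) part of $Du$, which is insensitive to the perturbation $s\,\nabla\phi\,\LL^2$; to upgrade the variational inequality to the exact equation $\div v=0$ I must show these contributions are negligible. In the direct direction the same singular part is what obstructs the $\eps\to0$ limit off the absolutely continuous set. Both points rest on the mere continuity of $u$ together with the fine structure of the reduced boundary of a subgraph, and this is the step on which I would concentrate the analysis.
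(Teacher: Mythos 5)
The paper states Lemma~\ref{lem5e973be0} without proof (it is offered as a known characterization), so your proposal has to stand on its own --- and by your own closing admission it does not: both implications are left open precisely at the point where the lemma has content. What is solid: the reduction of the distributional divergence in $\HH$ to the planar divergence for $t$-independent fields is correct and is the right first move, and the framework identifying the projected perimeter measure of the subgraph with the total variation of the vector measure $\sigma:=Du-(y,-x)\LL^2$ is the right one. Note, however, that for a merely continuous, possibly non-$BV$ function $u$, this representation of $P(E;\Omega)$ (including the fact that the supremum in the definition of perimeter can be computed using $t$-independent fields) is itself a nontrivial statement that you invoke without justification.

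The two deferred steps are genuine gaps, and one of your diagnoses is off. In the direction ``$\div v=0\Rightarrow S$ minimal'', mollification gives $v^\eps\to v$ only at $\LL^2$-Lebesgue points, hence $|\sigma_a|\LL^2$-a.e.\ (writing $\sigma=\sigma_a\LL^2+\sigma_s$ for the Lebesgue decomposition), but gives nothing on the singular part $\sigma_s$, which for continuous $u$ can carry a nontrivial Cantor part of the perimeter measure; so the required lower bound $\liminf_{\eps\to0}\int_{\de^*E\cap\Omega}\langle v^\eps,\nu_E\rangle\dd|\de E|\ge P(E;\Omega)$ is not established. This is not a removable technicality: the flux of a bounded, merely Borel, divergence-free field through the reduced boundary of a finite-perimeter set is genuinely not computed by pointwise evaluation against $\nu_E$, which is exactly why hypothesis (iii) appears in Proposition~\ref{prop5e95c6fe}. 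In the direction ``$S$ minimal $\Rightarrow\div v=0$'', since $\sigma_s\perp\LL^2$, the singular part contributes identically to $P(E_s;\Omega)$ and $P(E;\Omega)$ and cancels in the difference, so your worry about the Cantor part is misplaced; the genuine obstruction is the characteristic set $Z=\{\sigma_a=0\}$ (the projection of $S_0$), on which the integrand behaves like $|s|\,|\nabla\phi|$. The first variation then yields only $\bigl|\int_{\R^2\setminus Z}\langle\sigma_a/|\sigma_a|,\nabla\phi\rangle\dd\LL^2\bigr|\le\int_Z|\nabla\phi|\dd\LL^2$, which is the desired equation only when $\LL^2(Z)=0$. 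When $\LL^2(Z)>0$, the distributional divergence of $v$ moreover depends on how the normal is extended across $Z$, so the lemma must be read as asserting the existence of some divergence-free extension, and producing that extension on $Z$ (a duality-type construction, in the spirit of Cheng--Hwang--Yang's characterization of area minimizers) is an additional step you neither perform nor mention. As it stands, the proposal is a program whose deferred step is the entire content of the lemma.
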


\section{Construction of minimal cones}\label{sec5ee1fa47}

Consider a finite family $I_1,\ldots,I_k$ of disjoint open arcs in the unit circle $\sph^1$, and let $J_1,\ldots,J_r$ be the open connected arcs in $\sph^1\setminus\bigcup_{i=1}^k \overline{I_i}$. This set could be empty if $\sph^1=\bigcup_{i=1}^k \overline{I_i}$. Let $2\alpha_i$ be the length (opening angle) of $I_i$ and $L_i$ be the bisectrix of the arc $I_i$. 

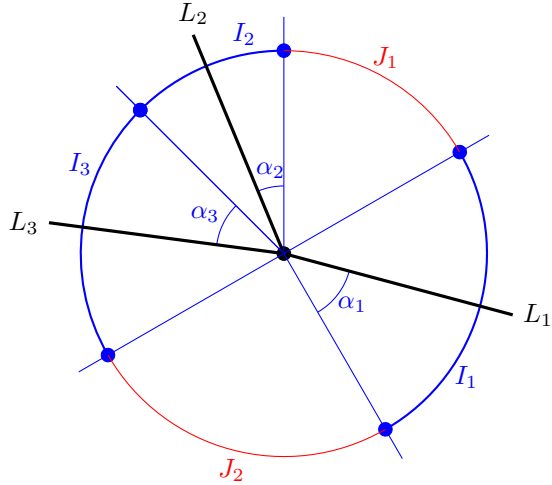
\begin{figure}[h]

\begin{tikzpicture}[scale=0.9]

\filldraw [black] (0,0) circle (0.1);
\draw[rotate=90, thick,blue] (3,0) arc (0:45:3) node[near start, above] {$I_2$};
\draw[rotate=90,blue] (1,0) arc (0:22.5:1) node[midway,above] {$\alpha_2$};
\filldraw [rotate=90,blue] (3,0) circle (0.1);
\filldraw [rotate=135,blue] (3,0) circle (0.1);
\draw[very thick,rotate=112.5] (0,0) -- (3.5,0) node[above] {$L_2$};
\draw[rotate=90,blue] (0,0) -- (3.5,0);
\draw[rotate=135,blue] (0,0) -- (3.5,0);
\draw[rotate=135, thick,blue] (3,0) arc (0:75:3) node[near start, left] {$I_3$};
\draw[rotate=135,blue] (1,0) arc (0:37.5:1) node[near start,left] {$\alpha_3$};
\filldraw [rotate=135,blue] (3,0) circle (0.1);
\filldraw [rotate=210,blue] (3,0) circle (0.1);
\draw[very thick,rotate=172.5] (0,0) -- (3.5,0) node[left] {$L_3$};
\draw[rotate=135,blue] (0,0) -- (3.5,0);
\draw[rotate=210,blue] (0,0) -- (3.5,0);
\draw[rotate=-60, thick,blue] (3,0) arc (0:90:3) node[near start, right] {$I_1$};
\draw[rotate=-60,blue] (1,0) arc (0:45:1) node[near start,right] {$\alpha_1$};
\filldraw [rotate=30,blue] (3,0) circle (0.1);
\filldraw [rotate=-60,blue] (3,0) circle (0.1);
\draw[very thick,rotate=-15] (0,0) -- (3.5,0) node[right] {$L_1$};
\draw[rotate=30,blue] (0,0) -- (3.5,0);
\draw[rotate=-60,blue] (0,0) -- (3.5,0);
\draw[rotate=30,red] (3,0) arc (0:60:3) node[above,midway]  {$J_1$};
\draw[rotate=210,red] (3,0) arc (0:90:3) node[midway,below] {$J_2$};
\end{tikzpicture}

\caption{An initial configuration with three open arcs  $I_1$, $I_2$, $I_3$ }
\end{figure}

The conical sector $0\# \overline{J_i}$ in $\R^2$ (the cone of vertex 0 over $\overline{J_i}$) is filled with half-lines leaving the origin. The conical sectors $0\#\overline{I_i}$ are filled with pairs of half-lines making angle $\alpha_i$ with the half-line $L_i$. 
This way, every point of $\R^2$ can be joined to some $L_i$ or $0$ by a unique shortest path that follows these lines. We lift these paths to $\HH$ as in Lemma~\ref{lem5e9758f5}.
So, we first lift as horizontal curves the half-lines $L_1,\ldots,L_k$ and those in the sectors $0\#\overline{J_i}$, $i=1,\ldots,r$, which remain in the plane $\{t=0\}$. 
Then, for $i=1,\ldots,k$, we lift the half-lines making angle $\alpha_i$ with $L_i$ to horizontal half-lines starting from the corresponding lifted line $L_i$.

We obtain a surface, which we call $C(I_1,\ldots,I_k)$, that is the $t$-graph of a function $u:\R^2\to\R$.
The following lemma gives an explicit formula in a specific case.
Notice that, up to a rotation of $\R^2$, the restriction of $u$ to $0\#I_i$ is equal to the function $u_{\alpha_i}$ on $0\#I$ described below.

\begin{lemma}[{\cite{MR2448649}}]
\label{lem:3.1}
	Let $\alpha\in(0,\pi)$ and define the open arc $I=\{(\cos(\theta),\sin(\theta)):|\theta|<\alpha\}\subset\sph^1$.
	Then $C(I)$ is the $t$-graph of the function
	\begin{equation}\label{eq5e963809}
	u_\alpha(x,y) = 
	\begin{cases}
	y (|y|\cot\alpha-x) & \text{if }(x,y) \in 0\# I , \\
	0 & \text{otherwise}.
	\end{cases}
	\end{equation}
	The function $u_\alpha$ is continuous, but not $C^1$, and has derivatives
	\begin{align*}
		\de_xu_\alpha(x,y) &= 
		\begin{cases}
		-y & \text{if }(x,y) \in 0\# I , \\
		0 & \text{if }(x,y) \in \R^2\setminus\overline{0\# I} ;
		\end{cases} \\
		\de_yu_\alpha(x,y) &= 
		\begin{cases}
		2|y|\cot\alpha-x & \text{if }(x,y) \in 0\# I , \\
		0 & \text{if }(x,y) \in \R^2\setminus\overline{0\# I} .
		\end{cases} \\
	\end{align*}
\end{lemma}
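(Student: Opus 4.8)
The plan is to produce $C(I)$ as the union of the lifts of its characteristic curves and to read off the graph function directly from Lemma~\ref{lem5e9758f5}. Normalizing coordinates so that the bisectrix $L$ is the positive $x$-axis and $I=\{(\cos\theta,\sin\theta):|\theta|<\alpha\}$, the two boundary rays of $0\#I$ point in the directions $(\cos\alpha,\pm\sin\alpha)$. By construction, the sector $0\#I$ is \emph{foliated} by the two families of half-lines issuing from the points $(a,0)$, $a\ge0$, of $L$ in these two directions, while the complementary sector $\R^2\setminus\overline{0\#I}$ is foliated by rays through the origin. First I would verify that these half-lines sweep out $\overline{0\#I}$ exactly once: writing a point of the upper family as $(a+s\cos\alpha,\,s\sin\alpha)$ and of the lower family as $(a+s\cos\alpha,\,-s\sin\alpha)$ with $s\ge0$, one solves uniformly $s=|y|/\sin\alpha$ and $a=x-|y|\cot\alpha$, so that the constraint $a\ge0$ becomes $x\ge|y|\cot\alpha$, which is precisely $\overline{0\#I}$. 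Since over each base point a unique characteristic curve passes, this also shows that $C(I)$ is a $t$-graph.

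Next I would compute the lift. Along $L$ and along the rays in $\R^2\setminus\overline{0\#I}$ the integrand $yx'-xy'$ of Lemma~\ref{lem5e9758f5} vanishes (any line through the origin has $yx'-xy'\equiv0$), so these curves stay in $\{t=0\}$ and $u_\alpha\equiv0$ off $0\#I$. For the upper family, $\gamma(s)=(a+s\cos\alpha,s\sin\alpha)$ gives $yx'-xy'=-a\sin\alpha$, whence $t(s)=-as\sin\alpha$; for the lower family, $\gamma(s)=(a+s\cos\alpha,-s\sin\alpha)$ gives $yx'-xy'=a\sin\alpha$, whence $t(s)=as\sin\alpha$. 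Substituting $s=|y|/\sin\alpha$ and $a=x-|y|\cot\alpha$, both cases collapse to $t=y(|y|\cot\alpha-x)$, the claimed formula for $u_\alpha$ on $0\#I$. The sign flip in $yx'-xy'$ between the two families is compensated exactly by the sign of $y$, which is the one bookkeeping point to handle with care.

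Finally I would record continuity and the derivatives. On the boundary rays, where $x=|y|\cot\alpha$, the interior formula yields $u_\alpha=y(|y|\cot\alpha-x)=0$, matching the exterior value, so $u_\alpha$ is continuous. Differentiating $u_\alpha=y|y|\cot\alpha-xy$ on $0\#I$ and using $\de_y(y|y|)=2|y|$ gives $\de_xu_\alpha=-y$ and $\de_yu_\alpha=2|y|\cot\alpha-x$, while both partials vanish on $\R^2\setminus\overline{0\#I}$, as asserted. Comparing the one-sided gradients across a boundary ray, $\de_xu_\alpha$ jumps from $-y\neq0$ to $0$, so $u_\alpha$ fails to be differentiable there and $C(I)$ is not of class $C^1$. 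The only genuinely delicate step is the first one, namely correctly extracting the foliation from the construction and confirming it covers the sector bijectively; after that, everything reduces to the one-line lift computation and elementary differentiation.
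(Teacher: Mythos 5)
Your proof is correct and follows essentially the same route as the paper: parametrize $0\#I$ by the half-lines $(a+s\cos\alpha,\pm s\sin\alpha)$ issuing from the bisectrix, lift them via Lemma~\ref{lem5e9758f5}, and read off $u_\alpha$ (the paper uses the balayage-area form of that lemma, you integrate $yx'-xy'$ directly, which is the same computation). Your extra verifications — that the foliation covers the sector bijectively, and the explicit derivative and non-$C^1$ checks — are points the paper leaves implicit, and they are all correct.
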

\begin{proof}
	The value of the function $u_\alpha:\R^2\to\R$ at a point $(x,y)$ is the balayage area of the curve from $(0,0)$ to $(x,y)$ that follows the half-lines singled out in the above construction.
	So, if $(x,y)\notin 0\#I$, then $u_\alpha(x,y)=0$.
	If $(x,y)\in0\#I$ and $y\ge0$, then there are $x_0\ge0$ and $s\ge0$ such that
	\[
	\begin{cases}
	x=x_0 + s \cos(\alpha), \\
	y=s\sin(\alpha), 
	\end{cases}
\]
that is
\[
	\begin{cases}
	x_0 = x-y\cot(\alpha), \\
	s= \frac{y}{\sin(\alpha)}.
	\end{cases}
	\]
	So define $u_\alpha(x,y)$ as the Balayage area of the curve from $(0,0)$ to $(x,y)$ that follows the $x$-axis until $(x_0,0)$ and then follows the line parallel to $(\cos\alpha,\sin\alpha)$, that is,
	\[
	u_\alpha(x,y) = 
	-2 \frac{x_0 s\sin(\alpha)}{2} = 
	y (y\cot\alpha-x) .
	\]
	Similarly, if $(x,y)\in0\#I$ and $y\le0$, one finds that 
	$u_\alpha(x,y) = y (-y\cot\alpha-x)$ and so~\eqref{eq5e963809} is proven.
\end{proof}

\begin{proposition}
Let $I_1,\ldots,I_k$ be a finite set of disjoint open arcs in $\sph^1$ and $C(I_1,\ldots,I_k)$ the associated surface. Then
\begin{enumerate}
\item $C(I_1,\ldots,I_k)$ is a conical continuous $t$-graph with vertex at $0$;
\item $C(I_1,\ldots,I_k)$ is a $C^{1,1}$ surface outside the lines $0\#\de J_i$, with singular set $\bigcup_{i=1}^k L_i$. 
It is not $C^2$ at points of the singular set  unless $\alpha=\pi/2$.
\item  $C(I_1,\ldots,I_k)$ is area-minimizing.
\item  The horizontal unit normal of $C(I_1,\ldots,I_k)$ is continuous.
\end{enumerate}
\end{proposition}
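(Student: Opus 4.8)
The plan is to reduce everything to the explicit description of the defining function $u$ and of its horizontal normal, and then to feed that normal into the calibration machinery of Section~\ref{sec5ee1facd}. Up to a rotation of $\R^2$, the restriction of $u$ to each sector $0\#I_i$ equals the function $u_{\al_i}$ of Lemma~\ref{lem:3.1}, while on each sector $0\#J_j$ the defining curves are rays through the origin, so $u\equiv0$ there. For part (1) I would first record that $u_\al(\la x,\la y)=\la^2u_\al(x,y)$, so $u$ is homogeneous of degree $2$ and its $t$-graph is invariant under the dilations $\delta_\la$, that is, a cone with vertex $0$. Continuity then only has to be checked along the rays $0\#\de I_i=0\#\de J_j$, where it follows from the fact that $u_\al$ vanishes on $\de(0\#I)$ (a one-line substitution $y=\pm x\tan\al$ into~\eqref{eq5e963809}), matching the value $0$ coming from the adjacent $J$-sector.

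For part (2) I would argue sector by sector using the derivative formulas of Lemma~\ref{lem:3.1}. On each open sector $u$ is a polynomial in $(x,y)$ away from the bisectrix, hence $C^\infty$ there; the only non-smoothness inside $0\#I_i$ comes from the term $|y|$ in $\de_yu_\al$. Since $\de_xu_\al=-y$ and $\de_yu_\al=2|y|\cot\al-x$ are continuous and Lipschitz across $\{y=0\}$, the surface is $C^{1,1}$ across the bisectrix, while $\de_{yy}u_\al=2\,\mathrm{sign}(y)\cot\al$ jumps there, so it fails to be $C^2$ unless $\cot\al=0$, i.e.\ $\al=\pi/2$. Across the rays $0\#\de J_i$ the gradient jumps (from $(-y,\,2|y|\cot\al-x)$ to $(0,0)$), so $u$ is only continuous, which gives $C^{1,1}$ exactly off those rays. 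For the singular set I would use that, for a $t$-graph, the horizontal normal is proportional to $(\de_xu-y)X+(\de_yu+x)Y$, so $S_0=\{\de_xu=y,\ \de_yu=-x\}$; plugging in the formulas yields $\{y=0\}$ inside each $0\#I_i$ (that is $L_i$) and only the origin inside each $0\#J_j$, whence $S_0=\bigcup_i L_i$.

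Parts (3) and (4) I would treat together through the horizontal unit normal $v:=\nu_h$, extended to $\HH$ independently of $t$ (it already depends on $(x,y)$ only). A direct computation from the formula above gives $v=\mp\sin\al_i\,X+\cos\al_i\,Y$ on the two halves $\{\pm y>0\}$ of $0\#I_i$, and $v=(-yX+xY)/\sqrt{x^2+y^2}$ on each $0\#J_j$. The crucial observations are: (a) the one-sided limits of $v$ on the rays $0\#\de J_j$ agree, both equal to $(-\sin\theta,\cos\theta)$ in the $X,Y$-frame at the boundary angle $\theta$, so $v$ is continuous on $\R^2\setminus\bigcup_iL_i$, equivalently on $S\setminus S_0$ — this is exactly part (4), and it is noteworthy precisely because $u$ itself is not $C^1$ there; and (b) across each bisectrix $L_i$ the only jump is in the $X$-component, whereas $X$ is tangent to the interface $\{y=0\}$ and the normal ($Y$-)component $\cos\al_i$ is continuous. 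I would then verify the hypotheses of the calibration Proposition~\ref{prop5e95c6fe}: $E$ has locally finite perimeter since $u$ is locally Lipschitz; $v=\nu_E$ on $\de^*E$ by construction; $v$ is continuous off the planes $\{y=0\}$ lying over the $L_i$, which are $|\de E|$-null because $|N_h|=2|y|/\sin\al_i\to0$ there; and $\div v=0$ distributionally by Proposition~\ref{prop5e95c720}, since inside each region $\div_{\R^2}v=0$ by the explicit computation, across the rays $0\#\de J_j$ the continuity from (a) makes~\eqref{eq5e956c15} hold, and across each $L_i$ observation (b) shows the normal traces coincide so no interface term appears. Hence $E$ is a perimeter minimizer and $S$ is area-minimizing. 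Alternatively, the vanishing of $\div v$ combined with Lemma~\ref{lem5e973be0} gives minimality directly.

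The main obstacle is the last verification, $\div v=0$ across the singular lines $L_i$, where $v$ is genuinely discontinuous; the argument works only because the discontinuity is purely tangential to the interface, so that the normal trace — the $Y$-component — is continuous and~\eqref{eq5e956c15} is satisfied. The secondary point requiring care is confirming that $S_0$ carries no perimeter, so that hypothesis (iii) of Proposition~\ref{prop5e95c6fe} holds; I would obtain this from the vanishing of $|N_h|$ along $L_i$.
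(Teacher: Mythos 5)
Your proof is correct and follows essentially the same route as the paper: reduce parts (1) and (2) to the explicit per-sector formulas for $u_{\alpha}$ from Lemma~\ref{lem:3.1}, and prove (3) and (4) by exhibiting the $t$-invariant horizontal unit normal as a calibration, checking $\div(v)=0$ via Proposition~\ref{prop5e95c720} and concluding with Proposition~\ref{prop5e95c6fe}. The only difference is one of detail: where the paper says the sector computations and the interface condition~\eqref{eq5e956c15} are ``clear'' or ``easily seen,'' you carry them out explicitly (the normal $\mp\sin\alpha_i\,X+\cos\alpha_i\,Y$ on the halves of $0\#I_i$, continuity across $0\#\de J_j$, and the purely tangential jump across $L_i$), which is exactly the verification the paper leaves to the reader.
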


\begin{proof}
	The surface $C(I_1,\ldots,I_k)$ is the graph of version of the function \eqref{eq5e963809} in each sector $0\# J_k$, up to a pre-composition with a rotation of the plane.
	Therefore, the first two statements are clear.
	
	We prove that $C(I_1,\ldots,I_k)$ is area-minimizing by presenting a calibration and thus applying Proposition~\ref{prop5e95c6fe}.
	Figure~\ref{fig5e976fc0} helps the understanding.
	Let $v$ be the horizontal vector field that is invariant along $t$ and that is equal to the upward unit normal to $C(I_1,\ldots,I_k)$ outside the half-lines $\bigcup_{i=1}^k L_i$ and $\bigcup_{i=1}^k 0\#\de I_j$.
	We claim that the distributional divergence of $v$ is zero.

	In fact, the unit normal of $C(I_1,\ldots,I_k)$ is the upward unit horizontal vector that is orthogonal to the horizontal characteristic lines we lifted.
	Above the sectors $0\# J_j$ is simply $(y X-xY)/(x^2+y^2)^{1/2}$, which is actually the calibration of the plane $\{t=0\}$;
	in particular, inside the interior of such regions, it is smooth and with zero divergence.
	Above the other sectors, $v$ has constant coefficients in the basis $(X,Y)$ above the regions between the half-lines $L_j$ and the boundaries $\#\de I_j$,
	where it has thus zero divergence.
	
	Finally, one easily sees that $v$ satisfies~\eqref{eq5e956c15} above the half-lines $\#\de I_j$ and the lines $L_j$.

	We conclude that $\div(v)=0$ by Proposition~\ref{prop5e95c720}.
\end{proof}

\begin{proposition}\label{prop5e96b917}
	Given a finite set of disjoint open arcs $I_1,\ldots,I_k$  in $\sph^1$, 
	the associated surface $C(I_1,\ldots,I_k)$ is of class $C^1$ if and only if $\sph^1=\bigcup_{j=1}^k\bar I_j$.
\end{proposition}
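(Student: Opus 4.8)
The plan is to reduce the $C^1$ regularity of the surface to the continuity of the gradient of its defining function $u$, and then to test that continuity ray by ray. Since $C(I_1,\dots,I_k)$ is the $t$-graph of the continuous function $u$ produced by the construction, it is of class $C^1$ as a surface in $\R^3=\HH^1$ exactly when $u\in C^1(\R^2)$. On each sector $0\#I_i$ the function $u$ is, up to the rotation sending $L_i$ to the positive $x$-axis, the function $u_{\alpha_i}$ of Lemma~\ref{lem:3.1}, while $u\equiv 0$ on each sector $0\#J_j$. By the derivative formulas of Lemma~\ref{lem:3.1}, $\nabla u$ is continuous on the closure of each sector (note that $\de_y u_\alpha=2|y|\cot\alpha-x$ is continuous across the bisectrix $y=0$), so the only places where $C^1$ can fail are the half-lines $0\#\de I_i=0\#\de J_j$ separating adjacent sectors, together with the vertex. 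The vertex is harmless: $u$ is homogeneous of degree $2$, hence $\nabla u$ is homogeneous of degree $1$ and tends to $0=\nabla u(0)$ at the origin.

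First I would carry out the one key computation: the interior limit of $\nabla u$ along a boundary ray. For the standard arc $I=\{|\theta|<\alpha\}$ with bisectrix the positive $x$-axis, parametrize the boundary ray $\theta=\alpha$ by $(x,y)=r(\cos\alpha,\sin\alpha)$ with $r>0$. Then, by Lemma~\ref{lem:3.1},
\[
\nabla u_\alpha=(-y,\,2|y|\cot\alpha-x)=(-r\sin\alpha,\,r\cos\alpha)=r(-\sin\alpha,\cos\alpha),
\]
and the same computation on $\theta=-\alpha$ gives $r(\sin\alpha,\cos\alpha)=r(-\sin(-\alpha),\cos(-\alpha))$. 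Thus on a boundary ray at angle $\theta_0$ the limiting gradient from the interior of an $I$-sector equals $r(-\sin\theta_0,\cos\theta_0)$, i.e. $r$ times the counterclockwise unit tangent to the circle at angle $\theta_0$. By the rotational equivariance of the construction (precomposing $u_{\alpha_i}$ with the rotation taking $L_i$ to the $x$-axis and using $R_\beta(-\sin\alpha,\cos\alpha)=(-\sin(\beta+\alpha),\cos(\beta+\alpha))$), this value depends only on the angle $\theta_0$ of the ray, and not on which arc $I_i$ nor on which of its two boundary rays is considered. From a $J$-sector, by contrast, $\nabla u\equiv 0$.

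With this in hand both implications follow. If $\sph^1=\bigcup_{j=1}^k\bar I_j$ there are no $J$-arcs, so every separating ray is shared by two adjacent $I$-sectors; by the previous paragraph the two one-sided limits of $\nabla u$ there coincide (both equal $r(-\sin\theta_0,\cos\theta_0)$), so $\nabla u$ extends continuously across every such ray. Combined with continuity inside each closed sector and at the vertex, this gives $u\in C^1(\R^2)$, hence $C(I_1,\dots,I_k)$ is of class $C^1$. Conversely, if $\sph^1\neq\bigcup_{j=1}^k\bar I_j$ there is at least one arc $J_j$, and an endpoint of $J_j$ (necessarily an endpoint of some $I_i$) determines a ray at some angle $\theta_0$ separating the sector $0\#J_j$, where $\nabla u=0$, from an adjacent $I$-sector, where $\nabla u\to r(-\sin\theta_0,\cos\theta_0)\neq 0$ for $r>0$. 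The two one-sided limits disagree, so $\nabla u$ is discontinuous across this ray, $u\notin C^1$, and the surface is not of class $C^1$.

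The only real obstacle is the middle step: one must compute the boundary-ray limit of the gradient correctly and, crucially, observe that it is rotationally equivariant, so that it depends solely on the angle of the ray. This is precisely what makes two adjacent $I$-sectors glue together in a $C^1$ fashion while an $I$–$J$ junction cannot. The remaining points — reducing $C^1$ of the surface to $C^1$ of $u$, the interior continuity of $\nabla u$ from Lemma~\ref{lem:3.1}, and the homogeneity argument at the vertex — are routine.
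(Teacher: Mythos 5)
Your proof is correct and follows essentially the same route as the paper: reduce $C^1$ regularity of the surface to continuity of $\nabla u$, use the derivative formulas of Lemma~\ref{lem:3.1} to compute the one-sided limits of the gradient along the boundary rays, and observe that two $I$-sectors match across a shared ray while an $I$--$J$ junction produces a nonzero jump. Your packaging of the key computation as the rotation-equivariant identity (interior limit equal to $r(-\sin\theta_0,\cos\theta_0)$) is equivalent to the paper's check that the tangential derivative vanishes and the normal derivative equals $s$ on the ray, and your explicit treatment of the vertex is a small completeness point the paper leaves implicit.
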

\begin{proof}
	Let $u:\R^2\to\R$ be the function whose $t$-graph is $C(I_1,\ldots,I_k)$.
	In each sector $0\#I_j$, the function $u$ is a version of $u_{\alpha_j}$ as in~\eqref{eq5e963809}, up to a rotation of the plane. 

	Since $\de_xu_\alpha$ is not continuous along the half-lines $0\#\de I$, then we conclude that, if $C(I_1,\ldots,I_k)$ is of class $C^1$, then $\sph^1=\bigcup_{j=1}^k\bar I_j$.
	
	Next, notice that the derivative of $u_\alpha$ along the vector $(\cos\alpha,\sin\alpha)$ (or the vector $(\cos(\alpha),-\sin(\alpha))$) is continuous in the half-plane $\{y>0\}$ (in the half-plane $\{y<0\}$, respectively), and zero along the half-line $0\#(\cos\alpha,\sin\alpha)$ (or $0\#(\cos\alpha,-\sin\alpha)$, respectively).
	
	So, if $0\#\bar I_1$ and $0\#\bar I_2$ share a half-line $0\#\hat v$, where $|\hat v|=1$, then the derivative of $u$ along $\hat v$ is continuous across $0\#\hat v$.
	
	What remains to be checked is the continuity across $0\#\hat v$ of the derivative of $u$ along the orthogonal direction to $\hat v$.
	Going back to $u_\alpha$, a computation shows that
	\[
	(-\sin(\alpha) \de_xu_\alpha + \cos(\alpha)\de_yu_\alpha)|_{(s\cos\alpha,s\sin\alpha)}
	= s 
	\]
	and
	\[
	(\sin(\alpha) \de_xu_\alpha + \cos(\alpha)\de_yu_\alpha)|_{(s\cos\alpha,-s\sin\alpha)}
	= s  ,
	\]
	where the derivatives are the continuous limit from inside $0\# I$.
	Therefore, the derivatives of $u$ along the orthogonal direction to $\hat v$ are continuous across $0\#\hat v$.
\end{proof}

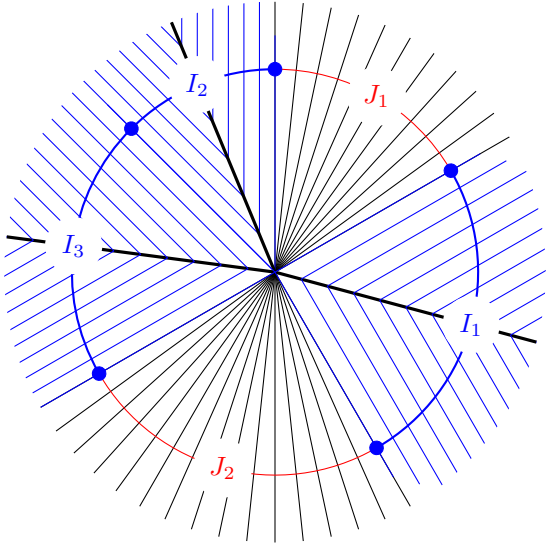
\begin{figure}[h]\label{fig5ee1f4ec}

\begin{tikzpicture}[scale=0.9]
\clip (0,0) circle (4);

\foreach \x in {0,...,10}
\draw[rotate=30+6*\x] (0,0) -- (4,0);
\draw[rotate=30,red] (3,0) arc (0:60:3) node[midway,circle,fill=white]  {$J_1$};

\foreach \x in {0,...,15}
\draw[rotate=210+(90/15)*\x] (0,0) -- (4,0);
\foreach \x in {0,...,10}
\draw[rotate=112.5,shift={(0.6*\x,0)},rotate=22.5,blue] (0,0) -- (3.5,0);
\foreach \x in {0,...,10}
\draw[rotate=112.5,shift={(0.6*\x,0)},rotate=-22.5,blue] (0,0) -- (3.5,0);
\draw[very thick,rotate=112.5] (0,0) -- (4,0) node[above] {$L_2$};
\draw[rotate=90, thick,blue] (3,0) arc (0:45:3) node[midway,circle,fill=white] {$I_2$};
\filldraw [rotate=90,blue] (3,0) circle (0.1);
\draw[rotate=210,red] (3,0) arc (0:90:3) node[midway,circle,fill=white] {$J_2$};

\filldraw [rotate=135,blue] (3,0) circle (0.1);
\filldraw [rotate=210,blue] (3,0) circle (0.1);
\draw[very thick,rotate=172.5] (0,0) -- (4,0) node[left] {$L_3$};
\foreach \x in {0,...,10}
\draw[rotate=135+37.5,shift={(0.4*\x,0)},rotate=37.5,blue] (0,0) -- (4,0);
\foreach \x in {0,...,10}
\draw[rotate=135+37.5,shift={(0.4*\x,0)},rotate=-37.5,blue] (0,0) -- (4,0);
\draw[rotate=135, thick,blue] (3,0) arc (0:75:3) node[midway,circle,fill=white] {$I_3$};

\draw[very thick,rotate=-15] (0,0) -- (4,0) node[right] {$L_1$};
\filldraw [rotate=30,blue] (3,0) circle (0.1);
\filldraw [rotate=-60,blue] (3,0) circle (0.1);
\foreach \x in {0,...,10}
\draw[rotate=-15,shift={(0.4*\x,0)},rotate=45,blue] (0,0) -- (3.5,0);
\foreach \x in {0,...,10}
\draw[rotate=-15,shift={(0.4*\x,0)},rotate=-45,blue] (0,0) -- (3.5,0);
\draw[rotate=-60, thick,blue] (3,0) arc (0:90:3) node[midway,circle,fill=white] {$I_1$};

\end{tikzpicture}

\caption{The complete configuration}
\end{figure}

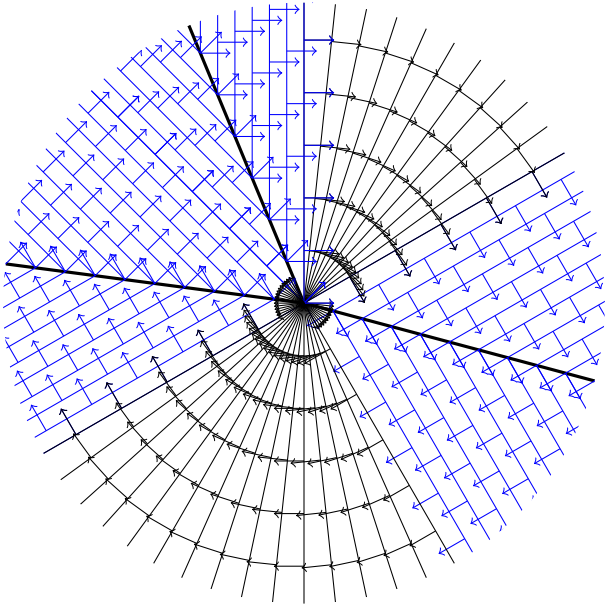
\begin{figure}[h]\label{fig5e976fc0}

\begin{tikzpicture}[scale=1]\jobname{fig5e977402}
\clip (0,0) circle (4);

\newcommand{\vleng}{0.4}
\newcommand{\vstep}{0.7}

\draw[very thick,rotate=-15] (0,0) -- (4,0) ; 
\foreach \x in {0,...,10} {
	\draw[rotate=-15,shift={(0.4*\x,0)},rotate=45,blue] (0,0) -- (4,0) ;
	\foreach \vx in {0,...,10}
		\draw[->,rotate=-15,shift={(0.4*\x,0)},rotate=45,blue] (\vstep*\vx,0) -- (\vstep*\vx, -\vleng);
}
\foreach \x in {1,...,10} {
	\draw[rotate=-15,shift={(0.4*\x,0)},rotate=-45,blue] (0,0) -- (4,0);
	\foreach \vx in {0,...,10}
		\draw[->,rotate=-15,shift={(0.4*\x,0)},rotate=-45,blue] (\vstep*\vx,0) -- (\vstep*\vx, -\vleng);
}

\foreach \x in {0,...,10} {
	\draw[rotate=30+6*\x] (0,0) -- (4,0);
	\foreach \vx in {0,...,10}
		\draw[->,rotate=30+6*\x] (\vstep*\vx,0) -- (\vstep*\vx, -\vleng);
}

\draw[very thick,rotate=112.5] (0,0) -- (4,0) ; 
\foreach \x in {0,...,10} {
	\draw[rotate=112.5,shift={(0.6*\x,0)},rotate=-22.5,blue] (0,0) -- (4,0);
	\foreach \vx in {0,...,10}
		\draw[->,rotate=112.5,shift={(0.6*\x,0)},rotate=-22.5,blue] (\vstep*\vx,0) -- (\vstep*\vx, -\vleng);
}
\foreach \x in {0,...,10} {
	\draw[rotate=112.5,shift={(0.6*\x,0)},rotate=22.5,blue] (0,0) -- (4,0);
	\foreach \vx in {0,...,10}
		\draw[->,rotate=112.5,shift={(0.6*\x,0)},rotate=22.5,blue] (\vstep*\vx,0) -- (\vstep*\vx, -\vleng);
}

\draw[very thick,rotate=172.5] (0,0) -- (4,0) ; 
\foreach \x in {0,...,10} {
	\draw[rotate=135+37.5,shift={(0.4*\x,0)},rotate=-37.5,blue] (0,0) -- (4,0);
	\foreach \vx in {0,...,10}
		\draw[->,rotate=135+37.5,shift={(0.4*\x,0)},rotate=-37.5,blue] (\vstep*\vx,0) -- (\vstep*\vx, -\vleng);
}
\foreach \x in {0,...,10} {
	\draw[rotate=135+37.5,shift={(0.4*\x,0)},rotate=37.5,blue] (0,0) -- (4,0);
	\foreach \vx in {0,...,10}
		\draw[->,rotate=135+37.5,shift={(0.4*\x,0)},rotate=37.5,blue] (\vstep*\vx,0) -- (\vstep*\vx, -\vleng);
}

\foreach \x in {0,...,15} {
	\draw[rotate=210+(90/15)*\x] (0,0) -- (4,0);
	\foreach \vx in {0,...,10}
		\draw[->,rotate=210+(90/15)*\x] (\vstep*\vx,0) -- (\vstep*\vx, -\vleng);
}

\end{tikzpicture}

\caption{The calibration of $C(I_1,\ldots,I_k)$}
\end{figure}

In the special case of two disjoint open intervals $I_1, I_2$ such that $\sph^1=\overline{I_1}\cup\overline{I_2}$, the singular line is a horizontal straight line $L$ whose complement is foliated by two families of parallel lines making a constant angle with $L$. This is merely $C^{1,1}$ except in the case $\alpha=\pi/2$ when we get the cone $t\le -xy$ with $C^\infty$ boundary.

Via approximation, we can consider also the above cones constructed using infinitely many arcs.
More precisely, let $\mathcal I$ be a family of disjoint open arcs of $\sph^1$, possibly countable.
For each $I\in\mathcal I$, let $u_I$ be the function whose $t$-graph is $C(I)$.
Define
\begin{equation}\label{eq5e96ca70}
u_{\mathcal I} = \sum_{I\in \mathcal I} u_I ,
\end{equation}
where the sum is well defined, because for every $v\in\R^2$ there is at most one $I\in\mathcal I$ with $u_I(v)\neq0$.

\begin{proposition}\label{prop5ee1f81b}
	Given a family $\mathcal I$ of disjoint open arcs of $\sph^1$, the function $u_{\mathcal I}$ is continuous and its $t$-graph $C(\mathcal I)$ is a minimal cone.
	Moreover, if $\mathcal I$ is infinite, then $C(\mathcal I)$ is not a $C^1$ surface.
\end{proposition}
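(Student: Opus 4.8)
The plan is to prove the three assertions separately—continuity of $u_{\mathcal I}$, minimality of $C(\mathcal I)$, and the failure of the $C^1$ property when $\mathcal I$ is infinite—relying on the explicit formulas of Lemma~\ref{lem:3.1}, the area-minimizing property of the finite cones shown above, and the stability of minimizers in Proposition~\ref{prop5e96e7aa}. For continuity I would first note that a family of pairwise disjoint open arcs of $\sph^1$ is at most countable and that, the arcs being disjoint, $\sum_{I\in\mathcal I}\alpha_I\le\pi$; in particular at each point of $\R^2$ at most one summand in~\eqref{eq5e96ca70} is nonzero. From~\eqref{eq5e963809} each $u_I$ is continuous, positively homogeneous of degree $2$, supported in the closed sector over $\bar I$, and vanishing on its boundary rays, with $\|u_I\|_{L^\infty(\sph^1)}\le 2\alpha_I$. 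Since only finitely many arcs satisfy $\alpha_I\ge\eps$ for a given $\eps>0$, the function $u_{\mathcal I}$ is on $\sph^1$ a uniform limit of the continuous finite partial sums, hence continuous on $\sph^1$, and then continuous on all of $\R^2$ by degree-$2$ homogeneity. I expect this step to be routine.

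For minimality and the cone property I would enumerate $\mathcal I=\{I_1,I_2,\dots\}$ and set $\mathcal I_n=\{I_1,\dots,I_n\}$. Each finite surface $C(\mathcal I_n)=C(I_1,\dots,I_n)$ is area-minimizing by the preceding proposition, so its subgraph $E_n=\{t\le u_{\mathcal I_n}\}$ is a local perimeter minimizer. Because the terms of a convergent series tend to $0$ we have $\alpha_{I_j}\to0$, and the same $L^\infty(\sph^1)$ bound together with homogeneity gives $u_{\mathcal I_n}\to u_{\mathcal I}$ uniformly on compact sets; hence $\one_{E_n}\to\one_E$ in $L^1_{\mathrm{loc}}$ for $E=\{t\le u_{\mathcal I}\}$, and Proposition~\ref{prop5e96e7aa} shows $E$ is a local perimeter minimizer, so $C(\mathcal I)=\de^* E$ is minimal. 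Finally, degree-$2$ homogeneity of $u_{\mathcal I}$ is exactly the invariance of $E$ under the Heisenberg dilations $\delta_\lambda(x,y,t)=(\lambda x,\lambda y,\lambda^2 t)$, so $C(\mathcal I)$ is a cone.

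The failure of $C^1$ regularity is the crux. If $\mathcal I$ is infinite, its (infinitely many) arc endpoints lie on the compact circle $\sph^1$ and therefore possess an accumulation direction $w=(\cos\beta_\infty,\sin\beta_\infty)$; the corresponding arcs have half-openings $\alpha_n\to0$ and bisectrix angles $\beta_n\to\beta_\infty$. Fixing $s>0$, I would study $\nabla u_{\mathcal I}$ along two sequences of points tending to $s\,w$ on the ray $0\#w$: the bisectrix points $p_n=s(\cos\beta_n,\sin\beta_n)$, at which $u_{\mathcal I}$ is differentiable (the surface is $C^{1,1}$ off the rays $0\#\de J_i$), and points $q_n$ interior to $I_n$ approaching its endpoint ray. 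Using the derivatives recorded in Lemma~\ref{lem:3.1} and rotating back to the fixed frame, one finds $\nabla u_{\mathcal I}(p_n)\to s(\sin\beta_\infty,-\cos\beta_\infty)$ while $\nabla u_{\mathcal I}(q_n)\to -s(\sin\beta_\infty,-\cos\beta_\infty)$. These are opposite nonzero vectors, so $\nabla u_{\mathcal I}$ has no limit at $s\,w$; consequently $C(\mathcal I)$ admits no tangent plane there—neither candidate normal being vertical—and so cannot be of class $C^1$. The main obstacle is exactly this computation: one must verify that the one-sided gradient from inside each shrinking sector keeps unit size (per unit distance) and rotates by essentially $\pi$ across each arc, so that neither cancellation between neighbouring arcs nor degeneration as $\alpha_n\to0$ can restore continuity at the accumulation ray.
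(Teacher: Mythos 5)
Your proposal is correct and follows essentially the same route as the paper: uniform convergence of the (disjointly supported) series for continuity and the cone property, Proposition~\ref{prop5e96e7aa} applied to the finite approximations $C(I_1,\ldots,I_n)$ for minimality, and the oscillation of $\nabla u_{\mathcal I}$ between the bisectrix value and the boundary-ray value of shrinking sectors accumulating at a direction $\hat v$, combined with boundedness of the gradient, to rule out $C^1$ regularity. Your treatment is in fact slightly more careful than the paper's at two points: the $L^\infty(\sph^1)$ bound $2\alpha_I$ avoids the paper's $\tan\alpha$ factor, and you rotate the gradients back to a fixed frame to exhibit two genuinely opposite limit vectors rather than tracking a single directional derivative in arc-adapted coordinates.
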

\begin{proof}
	From~\eqref{eq5e963809}, one easily sees that $|u_\alpha(v)|\le |v|^2\tan(\alpha)$.
	We deduce that the sum in~\eqref{eq5e96ca70} converges uniformly on compact sets.
	So, $u_{\mathcal I}$ is continuous and its $t$-graph is a cone.
	By Proposition~\ref{prop5e96e7aa}, $C(\mathcal I)$ is a minimal surface.
	
	Finally, if $\mathcal I$ is infinite, then there are $\hat v\in\sph^1$ and a sequence $\{I_k\}_k\subset\mathcal I$ so that $dist(\hat v,I_k)\to0$ and the amplitude of $I_k$ also goes to zero.
	Now, if we consider the function $u_\alpha$ in \eqref{eq5e963809}, we see that its $y$-derivative is
	\[
	\de_yu_\alpha(x,y) = 
	\begin{cases}
	2|y|\cot\alpha-x & \text{if }(x,y) \in 0\#I , \\
	0 & \text{if }(x,y) \in \R^2\setminus\overline{0\#I} .
	\end{cases}
	\]
	In particular, if $(x,y) \in 0\#I $ is close enough to $(1,\tan(\alpha))$, then $\de_yu_\alpha(x,y)$ is arbitrary close to $1$, while $\de_yu_\alpha(1,0)=-1$.
	We conclude that for every $k$ there are points in $0\#I_k$ where some derivative of $u_{\mathcal I}$ oscillates between $1$ and $-1$, so $\nabla u_{\mathcal I}$ is not continuous at $\hat v$.
	Since $\nabla u_{\mathcal I}$ remains bounded, $C(\mathcal I)$ is not a $C^1$ surface.
\end{proof}

\section{Classification results}\label{sec5ee200a6}

\subsection{Characterization of $C^1$ minimal cones}\label{sec5ee1fd1e}

This section is devoted to the proof of our main classification result in the $C^1$ case, 
Theorem~\ref{thm5ee1fc47}.

\begin{lemma}
\label{lem:plane}
A conical $C^1$ surface $S\subset\HH^1$ without singular points is a vertical plane.

\end{lemma}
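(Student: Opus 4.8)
The plan is to place the vertex of the cone at the origin and show that $S$ coincides with its tangent plane there, using only dilation invariance and $C^1$ regularity; minimality plays no role in this argument.

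First I would record that $S$ passes through $0$. For any $p=(x,y,t)\in S$ the dilates $\delta_\lambda(p)=(\lambda x,\lambda y,\lambda^2 t)$ lie in $S$ and tend to $0$ as $\lambda\to0$, so $0$ belongs to $S$ (which is closed, being the boundary of a set). Since $\delta_\lambda(S)=S$ and $\delta_\lambda(0)=0$, the tangent plane $T_0S$ is invariant under $D\delta_\lambda|_0=\mathrm{diag}(\lambda,\lambda,\lambda^2)$ for every $\lambda>0$. A short linear-algebra argument shows that a $2$-plane invariant under all these maps is either the horizontal plane $\{t=0\}$, which equals $H_0=\Span\{X(0),Y(0)\}$, or a plane containing the $t$-axis: any invariant plane carrying a vector with nonzero $t$-component must contain $(0,0,1)$, hence be vertical. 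The first option would make $0$ a singular point, which is excluded; hence $T_0S$ is a vertical plane $V$. Composing with a rotation about the $t$-axis, a graded automorphism of $\HH$ that preserves cones, singular sets and the vertical-plane condition, I may assume $V=\{x=0\}$.

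Next comes the key step. By the implicit function theorem, near $0$ the surface is a graph $x=f(y,t)$ with $f\in C^1$, $f(0,0)=0$ and $\nabla f(0,0)=0$, since $T_0S=\{x=0\}$. Dilation invariance of $S$ translates into the anisotropic homogeneity
\[
f(\lambda y,\lambda^2 t)=\lambda\,f(y,t)
\]
for $(y,t)$ near $0$ and $\lambda$ small. Differentiating in $y$ gives $f_y(\lambda y,\lambda^2 t)=f_y(y,t)$, so $f_y$ is constant along the orbits $\lambda\mapsto(\lambda y,\lambda^2 t)$; letting $\lambda\to0$ and using continuity of $f_y$ at the origin forces $f_y\equiv0$. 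Differentiating in $t$ gives $f_t(\lambda y,\lambda^2 t)=\lambda^{-1}f_t(y,t)$; the left-hand side tends to $f_t(0,0)=0$ as $\lambda\to0$, while the right-hand side blows up unless $f_t(y,t)=0$, so $f_t\equiv0$. Thus $\nabla f\equiv0$ near $0$, whence $f\equiv0$ and $S$ coincides with $V$ in a neighborhood of the vertex.

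Finally I would propagate this globally by dilation: if $S=V$ on a ball $B_\varepsilon(0)$, then for any $q\in V$ one has $\delta_\lambda(q)\in V\cap B_\varepsilon(0)=S\cap B_\varepsilon(0)\subset S$ for small $\lambda$, whence $q=\delta_{1/\lambda}(\delta_\lambda(q))\in S$; the symmetric argument gives the reverse inclusion, so $S=V$. The main obstacle is the key step, namely extracting rigidity from the single scalar relation $f(\lambda y,\lambda^2 t)=\lambda f(y,t)$: it is precisely the \emph{anisotropy} of the Heisenberg dilations that makes $f_t$ homogeneous of degree $-1$, so that its mere continuity at the vertex, guaranteed by $C^1$ regularity, already forces it to vanish; this is where the hypotheses ``$C^1$'', ``cone'', and ``no singular point at the vertex'' combine. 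An alternative route, closer to the rest of the section, would invoke Proposition~\ref{prop5e97045a}: with $S_0=\emptyset$ the surface is ruled by complete horizontal lines, its horizontal normal is constant along each ruling and $0$-homogeneous along dilation orbits, and one shows these data are compatible only with a single ruling direction; I find the direct homogeneity argument above cleaner and self-contained.
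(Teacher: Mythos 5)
Your proof is correct, and like the paper's it only uses non-singularity at the vertex (minimality indeed plays no role); but it takes a genuinely different and substantially longer route. The paper's proof is a single global observation: for any $p=(p_1,p_2,p_3)\in S$ off the vertical axis, the dilation orbit $s\mapsto(sp_1,sp_2,s^2p_3)$ lies in $S$ and reaches the origin with \emph{horizontal} tangent $p_1X_0+p_2Y_0$, because the $t$-component vanishes to second order; since $0$ is not singular, $T_0S\cap\{t=0\}$ is a single line, so the horizontal projection of \emph{every} point of $S$ lies on that fixed line, and $S$ is contained in (hence, being a closed surface, equal to) one vertical plane --- no implicit function theorem, no homogeneity identity, no separate globalization step. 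You instead (i) identify $T_0S$ as a vertical plane by invariant-subspace linear algebra, (ii) write $S$ locally as a graph $x=f(y,t)$ over $T_0S$ and exploit the anisotropic homogeneity $f(\lambda y,\lambda^2 t)=\lambda f(y,t)$, where differentiating and letting $\lambda\to0$ kills $f_y$ by continuity and $f_t$ by its degree $-1$ homogeneity, and (iii) propagate flatness globally by dilations. All three steps are sound; in (ii) one should just note that the identity holds for all $\lambda\in(0,1]$ on a fixed box around the origin (the dilates of a graph point stay in the box), so the differentiation is legitimate. Both arguments ultimately rest on the same anisotropy of the Heisenberg dilations, but the paper converts it directly into a global constraint on the horizontal projection, making the proof three lines, while yours makes the local mechanism completely explicit at the cost of an IFT step and a globalization step. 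One caveat on your closing remark: the alternative route via Proposition~\ref{prop5e97045a} would require $S$ to be \emph{minimal}, which is not a hypothesis of this lemma, so the direct argument is not merely cleaner but necessary at this level of generality.
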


\begin{proof}
For any $p=(p_1,p_2,p_3)$ in $S$ out of the vertical axis $V$ we consider the curve $\ga(s)=(sp_1,sp_2,s^2p_3)$, whose tangent vector at $s=0$ is the horizontal vector $\ga'(0)=p_1X_0+p_2Y_0\neq 0$. Since $0$ is not a singular point, $S\setminus V$ must be contained in the vertical plane $p_2x-p_1y=0$ and so is a vertical plane.
\end{proof}

\begin{lemma}
\label{lem:half-line}
Let $S\subset \hh^1$ be a conical $C^1$ surface, and let $p\in S_0\setminus\{0\}$. Then $0$ and $p$ belong to a horizontal half-line contained in $S_0$.
\end{lemma}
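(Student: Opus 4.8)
The plan is to exploit the conical structure of $S$, i.e.\ its invariance under the anisotropic dilations $\delta_\lambda(x,y,t)=(\lambda x,\lambda y,\lambda^2 t)$, $\lambda>0$ (these are the maps appearing as the curves $\gamma$ in Lemma~\ref{lem:plane}). Writing $p=(p_1,p_2,p_3)$, coneness gives $\delta_\lambda(p)\in S$ for every $\lambda>0$, so the dilation orbit $\gamma(\lambda):=\delta_\lambda(p)=(\lambda p_1,\lambda p_2,\lambda^2 p_3)$ is a smooth curve lying in $S$, with $\gamma(1)=p$ and $\gamma(\lambda)\to 0$ as $\lambda\to 0^+$. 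I will also use that the $\delta_\lambda$ are contact automorphisms: their differentials preserve $\hhh$, since a direct check gives $d\delta_\lambda(X_q)=\lambda X_{\delta_\lambda(q)}$ and $d\delta_\lambda(Y_q)=\lambda Y_{\delta_\lambda(q)}$.

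The crux is to show $p_3=0$. Since $p\in S_0$, the tangent plane $T_pS$ is horizontal, i.e.\ $T_pS=\hhh_p$. As $\gamma$ is a curve in $S$, its velocity $\gamma'(1)=(p_1,p_2,2p_3)$ lies in $T_pS$, hence is horizontal. A vector $(a,b,c)$ is horizontal at $p$ exactly when $c=a\,p_2-b\,p_1$ (because $\hhh_p=\Span\{\partial_x+p_2\partial_t,\ \partial_y-p_1\partial_t\}$); imposing this on $\gamma'(1)$ yields $2p_3=p_1p_2-p_2p_1=0$, so $p_3=0$. Consequently the orbit becomes the genuine straight half-line $\{(\lambda p_1,\lambda p_2,0):\lambda>0\}$, which is precisely the horizontal half-line issuing from $0$ in the direction $p_1X+p_2Y$; note $(p_1,p_2)\neq(0,0)$ because $p\neq 0$ and $p_3=0$.

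It remains to verify that this half-line is contained in $S_0$. Since $d\delta_\lambda$ carries horizontal planes to horizontal planes and $\delta_\lambda(S)=S$, for every $\lambda>0$ the plane $T_{\gamma(\lambda)}S=d\delta_\lambda(T_pS)$ is horizontal, so $\gamma(\lambda)\in S_0$; this places the open half-line $\{(\lambda p_1,\lambda p_2,0):\lambda>0\}$ inside $S_0$. Because $S$ is $C^1$ its tangent plane varies continuously and the condition of being horizontal is closed, so $S_0$ is relatively closed in $S$; as $\gamma(\lambda)\to 0\in S$, the vertex $0$ also lies in $S_0$. Thus the closed horizontal half-line $\{(\lambda p_1,\lambda p_2,0):\lambda\ge 0\}\subset S_0$ contains both $0$ and $p$, which is the claim.

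I do not expect a serious obstacle: once the conical invariance is invoked the computation is immediate. The only points demanding care are the verification that the dilations are contact maps (needed to transport the horizontality of $T_pS$ along the whole orbit, hence to keep the half-line in $S_0$) and the relative closedness of $S_0$ in $S$ (needed to include the vertex $0$ in the half-line).
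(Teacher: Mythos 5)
Your proof is correct and follows essentially the same route as the paper: both consider the dilation orbit $\gamma(s)=\delta_s(p)=(sp_1,sp_2,s^2p_3)$, use horizontality of $T_pS$ to force $p_3=0$ from the velocity $\gamma'(1)$, and then invoke the fact that dilations preserve the horizontal distribution to place the resulting half-line in $S_0$. Your added remarks (that $(p_1,p_2)\neq(0,0)$ and that $S_0$ is relatively closed in $S$, so the vertex is included) are minor refinements of details the paper leaves implicit.
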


\begin{proof}
We let $p=(p_1,p_2,p_3)$ and consider the curve $\ga(s):=(sp_1,sp_2,s^2p_3)$, whose image is contained in $S$. We trivially  have $\ga'(s)=p_1 X_{\ga(s)}+p_2Y_{\ga(s)}+2tp_3 T_{\ga(s)}$. Since $\ga(1)=p$ and $p$ is a singular point, the vector $\ga'(1)$ is horizontal and so $p_3=0$. This implies that $\ga(s)$ is a parameterization of a horizontal half-line starting from $0$. Since dilations preserve the horizontal distribution, $\ga(s)\in S_0$ for all $s\ge 0$.
\end{proof}

In the following we denote by $H$ the plane $t=0$.

\begin{lemma}\label{lem5e97064c}
	Let $S\subset \hh^1$ be a conical $C^1$ minimal $t$-graph.
	If $p\in S\setminus H$, then $p$ is a regular point and there are a singular point $q\in S\cap H$ and a horizontal half-line $L$ starting from $q$ and containing~$p$.	
\end{lemma}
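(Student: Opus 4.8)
The plan is to establish the two assertions in turn: first that $p$ is regular, which is essentially a restatement of Lemma~\ref{lem:half-line}, and then to produce $q$ and $L$ out of the ruling of the regular part furnished by Proposition~\ref{prop5e97045a}, using conicality in a crucial way. To begin, I would record that $S_0\subset H$: by Lemma~\ref{lem:half-line} any $p'\in S_0\setminus\{0\}$ lies on a horizontal half-line through the origin contained in $S_0$, and the horizontal lift of a ray from the origin stays in $\{t=0\}$ by Lemma~\ref{lem5e9758f5}, so $p'\in H$; together with $0\in H$ this gives $S_0\subset H$. Since $p\in S\setminus H$ forces $p\notin S_0$, the point $p$ is regular, which is the first claim. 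Writing $S=\{t=u(x,y)\}$ with $u$ homogeneous of degree $2$, I would then invoke Proposition~\ref{prop5e97045a}: through the regular point $p$ there is a maximal horizontal line segment $\Sigma\subset S$ of the ruling, lying on the full horizontal line $\bar\Sigma=\{p+sw:s\in\R\}$ with constant Euclidean velocity $w=(a,b,c)$; horizontality imposes $c=ap_2-bp_1$ and hence $t(p+sw)=p_3+cs$ along $\bar\Sigma$.

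The second step is to locate $q$ by showing that $\bar\Sigma$ is \emph{not} level, i.e. $c\neq0$. If $c=0$, then $\bar\Sigma$ has constant height $p_3$ and horizontality forces its direction $(a,b)$ parallel to $(p_1,p_2)$ (note $(p_1,p_2)\neq(0,0)$, as otherwise $p=(0,0,u(0,0))=0\in H$), so $\bar\Sigma$ projects onto the line through $0$ and $(p_1,p_2)$. But then the points of $\bar\Sigma$ over $\lambda(p_1,p_2)$ would have to satisfy simultaneously $u(\lambda p_1,\lambda p_2)=p_3$ (constant height) and $u(\lambda p_1,\lambda p_2)=\lambda^2 p_3$ (homogeneity) for $\lambda$ ranging in an interval around $1$, forcing $p_3=0$ — a contradiction. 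Hence $c\neq0$, and $\bar\Sigma$ meets $H$ in exactly one point $q:=p-\tfrac{p_3}{c}\,w$.

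Finally I would show $q\in S_0\cap H$ and read off $L$. If $q=0$, the vertex of a $C^1$ cone is singular, since $\nabla u$ is homogeneous of degree $1$ and therefore $\nabla u(0)=0$, so $T_0S=H=\hhh_0$. If $q\neq0$, suppose $q$ were regular. On the one hand $\bar\Sigma$ is a horizontal line in $S$ through $q$ whose direction has nonzero $t$-component ($c\neq0$); on the other hand, by conicality the ray $R_q=\{(\lambda q_1,\lambda q_2,0):\lambda>0\}$ lies in $S$ and is a horizontal line through $q$ with direction $(q_1,q_2,0)$, which is nonzero (as $q\neq0$) and has vanishing $t$-component. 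At a regular point $T_qS\cap\hhh_q$ is a single line, so any two horizontal lines in $S$ through $q$ must coincide; since the two directions above are not parallel, this is impossible, whence $q\in S_0$. Because $S_0\subset H$ and $\bar\Sigma\cap H=\{q\}$, the point $q$ is the only possible endpoint of $\Sigma$ lying in $S_0$, in agreement with the $t$-graph clause of Proposition~\ref{prop5e97045a}; the half-line $L$ of $\bar\Sigma$ issuing from $q$ through $p$ is then horizontal, starts at the singular point $q\in S\cap H$, contains $p$, and satisfies $\Sigma\subset L$. I expect the main obstacle to be precisely the two places where conicality is indispensable — ruling out level rulings and proving $q$ singular via the competing ray $R_q$ — while the remaining bookkeeping is routine.
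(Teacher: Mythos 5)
Your strategy is essentially the paper's: establish $S_0\subset H$ via Lemma~\ref{lem:half-line} so that $p$ is regular, take the ruling line through $p$ from Proposition~\ref{prop5e97045a}, show it is not level, and identify its intersection with $H$ as a singular point by exhibiting two independent horizontal tangent directions there (the ruling direction and the radial direction coming from conicality). Your homogeneity argument for $c\neq 0$ is a valid substitute for the paper's observation that a level ruling line would pass over the origin and meet the $t$-axis, contradicting the $t$-graph property. However, there is an unjustified step at the heart of your $q\neq 0$ case: you assert that ``$\bar\Sigma$ is a horizontal line \emph{in} $S$ through $q$''. Proposition~\ref{prop5e97045a} only gives you the maximal segment $\Sigma\subset S$; when $\Sigma$ is a proper half-line ending at $q$, the inclusion $\bar\Sigma\subset S$ is false by maximality, and even the membership $q\in S$ (which you also need, both for the ray $R_q=\delta_\lambda q$ to lie in $S$ by conicality and for the final claim that $L\subset S$) is not available except through the endpoint clause of Proposition~\ref{prop5e97045a} --- which, when it applies, already yields $q\in S_0$ and makes your contradiction argument moot. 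So as written, the key step rests on a premise that is not established and is in general false.

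The repair is short and uses only facts you state elsewhere: since the endpoints of $\Sigma$ lie in $S_0\subset H$ and $\bar\Sigma\cap H=\{q\}$ (because $c\neq0$), either (a) $\Sigma$ has an endpoint, which is then necessarily $q$, and $q\in S_0$ follows \emph{directly} from Proposition~\ref{prop5e97045a}, with no further argument; or (b) $\Sigma=\bar\Sigma$ is the whole line, in which case $q\in\Sigma\subset S$, the ray $R_q$ lies in $S$, and your two-tangent-directions argument is legitimate. This dichotomy is precisely the case split $s_0>-\infty$ versus $s_0=-\infty$ in the paper's proof. You in fact mention the relevant observation --- that $q$ is the only possible endpoint of $\Sigma$ --- but only at the very end, as a consistency check, rather than at the point where it is needed to license the argument; moving it to the front of the $q\neq0$ case (and noting that in case (a) nothing remains to prove) closes the gap and makes the proof correct.
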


\begin{proof}
	Let $p=(x,y,t)\in S$ with $t\neq0$. We know that the point $p$ is regular by Lemma~\ref{lem:half-line}.
	By Proposition~\ref{prop5e97045a}, there are $\hat v\in H$, with $|\hat v|=1$, and $s_0<0$ (possibly $s_0=-\infty$) such that 
	$\gamma((s_0,+\infty))\subset S$, where $\gamma(s) = p\exp(s\hat v)$, and $s_0$ is minimal with this property.
	We have two cases.\\
	First, if $s_0=-\infty$, then there is $s_1\in\R$ such that $\gamma(s_1)\in H\cap S$.
	Indeed, if this were not the case, the horizontal line $\gamma(\R)$ would meet the $t$-axis in a non-zero point contradicting the hypotheses that $S$ is a $t$-graph and $0\in S$.
	Now, notice that $\gamma'(s_1)=\hat v$ is not parallel to $\frac{\dd}{\dd \lambda}|_{\lambda=1}\delta_\lambda\gamma(s_1)$, but these two vectors are both horizontal and tangent to $S$. 
	Therefore $\gamma(s_1)$ is a singular point of $S$ and thus, the lemma is proven if we take $L=\gamma([s_1,+\infty))$ if $s_0>s_1$ or $L=\gamma((-\infty,s_1])$ if $s_0<s_1$.\\
	Second, if $s_0>-\infty$, then $\gamma(s_0)$ is a singular point and thus it belongs to $H$. 
	The lemma is proven if we take $L=\gamma([s_0,+\infty))$.
\end{proof}

\begin{lemma}\label{lem5e97064c2}
	Let $S\subset \hh^1$ be a $C^1$ minimal surface invariant by dilations centered at $0$.
	If $S_0=\{0\}$, then $S$ is the horizontal plane $\{t=0\}$.
\end{lemma}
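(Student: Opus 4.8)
The plan is to reduce the statement to the $t$-graph situation already treated in Lemma~\ref{lem5e97064c}, by first showing that $S$ is a global $t$-graph. Since $S_0=\{0\}\subseteq S$, the vertex $0$ lies on $S$, and because $0\in S_0$ its tangent plane is horizontal; at the origin the horizontal plane is exactly $\{t=0\}=H$, so $T_0S=H$.

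Next I would exploit the $C^1$ regularity: the tangent planes of $S$ vary continuously, hence they stay transverse to the $t$-axis on a neighborhood $U$ of $0$. Consequently the projection $\pi\colon(x,y,t)\mapsto(x,y)$ restricts to a local $C^1$ diffeomorphism near $0$, and $S\cap U$ is the $t$-graph of a $C^1$ function $u_0$ defined near the origin of $\R^2$. The dilation invariance $\delta_\lambda(S)=S$, which in coordinates reads $\delta_\lambda(x,y,t)=(\lambda x,\lambda y,\lambda^2 t)$, forces $u_0(\lambda x,\lambda y)=\lambda^2 u_0(x,y)$, i.e.\ $u_0$ is homogeneous of degree $2$. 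This homogeneity lets me extend $u_0$ to a function $u$ on all of $\R^2$ and to check that the graph $\{t=u(x,y)\}$ coincides with $S$: every point of $S$ is carried by some $\delta_\lambda$ into the local graph $S\cap U$, so it already lies on the homogeneous extension, and conversely the extension is contained in the cone $S$. Thus $S$ is a conical $C^1$ minimal $t$-graph, and Lemma~\ref{lem5e97064c} applies.

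Finally I would argue by contradiction. If some $p\in S$ had $t(p)\neq 0$, then Lemma~\ref{lem5e97064c} would produce a singular point $q\in S\cap H$ and a horizontal half-line from $q$ through $p$ contained in $S$. But $S_0=\{0\}$ forces $q=0$, and every horizontal half-line issuing from $0$ has the form $s\mapsto(sv_1,sv_2,0)$ and therefore lies in $H=\{t=0\}$, contradicting $t(p)\neq 0$. Hence $S\subseteq H$, so $u\equiv 0$ and $S=\{t=0\}$.

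The one genuinely delicate point is the passage from a local $t$-graph near the vertex to a global one: I must verify that the degree-$2$ homogeneity of $u_0$ (a direct consequence of $\delta_\lambda(S)=S$) is enough to reconstruct the whole of $S$ as a \emph{single-valued} graph over $\R^2$, in particular that no point of $S$ escapes the homogeneous extension of the local graph and that the extension remains a graph. Everything after that reduction is a short deduction from Lemma~\ref{lem5e97064c} together with the elementary fact that horizontal lines through $0$ stay in $\{t=0\}$.
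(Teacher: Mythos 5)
Your proof is correct, but it follows a genuinely different route from the paper's. The paper never reduces to the $t$-graph setting: it takes a point $p_0\in S$ with $t_0\neq 0$, observes (via Lemma~\ref{lem:half-line}) that $p_0$ is regular and that no horizontal line through $p_0$ can contain $0$, so by Proposition~\ref{prop5e97045a} the ruling through $p_0$ must be an \emph{entire} horizontal line contained in $S$; it then dilates this line to sweep out a ruled cone inside $S$ and claims, by ``direct computations'' that are not written out, that such a ruled cone has singular points other than $0$, contradicting $S_0=\{0\}$. You instead first upgrade $S$ to a global $C^1$ $t$-graph, using $T_0S=H$ (valid, since $0\in S_0$ and the horizontal plane at the origin is $\{t=0\}$), the local graph representation, and degree-$2$ homogeneity of the local graph function under $\delta_\lambda(x,y,t)=(\lambda x,\lambda y,\lambda^2 t)$; this reduction is sound, and your two inclusions (the homogeneous extension lies in the cone $S$, and every point of $S$ is dilated into the local graph) close it correctly. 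Having the graph property, you can invoke Lemma~\ref{lem5e97064c}, which the paper proves just before this lemma but does not actually use in its own proof here, and the contradiction becomes elementary: the singular point $q$ produced by that lemma must be $0$, and horizontal half-lines from $0$ stay in $\{t=0\}$. What your approach buys is the elimination of the paper's unverified ruled-surface computation, replacing it with an already-proven lemma plus a one-line fact about horizontal lines through the origin; what it costs is the extra (routine, but necessary) local-to-global graph argument, which the paper's direct argument sidesteps entirely. Both proofs ultimately rest on the ruling structure of Proposition~\ref{prop5e97045a}.
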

\begin{proof}
	Let $p_0=(x_0,y_0,t_0)\in S$ and suppose that $t_0\neq0$.
	Since $p_0$ is a regular point but no horizontal line passing through $p_0$ contains $0$, then, by Proposition~\ref{prop5e97045a}, there is $\hat v\in H$ such that the entire line $s\mapsto p_0\exp(s\hat v)$ is contained in $S$.
	Since $S$ is a cone, for every $s\in\R$ and $\lambda>0$, we have
	\[
	\delta_\lambda(p_0\exp(s\hat v/\lambda)) 
	= (\lambda x_0, \lambda y_0, \lambda^2 t_0)\exp(s\hat v) \in S .
	\]
	Next, we claim that, if $t_0\neq0$, then $0$ is not the only singular point.
	Indeed, without loss of generality, we assume $\hat v= X$. Then the function $\phi(\lambda,s) := \delta_\lambda(p_0\exp(s\hat v/\lambda)) = (\lambda x_0+s ,\lambda y_0 , \lambda^2t_0 + \lambda s y_0)$ parametrizes a part of $S$.
	Notice that $\omega|_{\phi(s,\lambda)}[\de_\lambda\phi(\lambda,s)] = 2(\lambda t_0+sy_0)$ and $\omega|_{\phi(\lambda,s)}[\de_s\phi(\lambda,s)] = 0$.
	Therefore, $\phi\{(\lambda,s):\lambda t_0+sy_0=0\} \subset S_0$ and thus,
	if $t_0\neq0$, then $S_0\neq\{0\}$.
	
	However, the fact that $0$ is not the only singular point is in contradiction with the assumption $S_0=\{0\}$.
	Therefore, $t_0=0$ and so $S\subset H$.
	Since $S$ is a cone, $S=H$.
\end{proof}

\begin{proof}[Proof of Theorem~\ref{thm5ee1fc47}] 
In case $S$ has no singular points, Lemma~\ref{lem:plane} implies that $S$ is a vertical plane.
If $S$ has only $0$ as a singular point, then $S=H$ by Lemma~\ref{lem5e97064c2}.

Finally let us assume that $S_0$ contains at least two points and that $S$ is invariant by dilations centered at $0$. Then $0$ is a singular point, and  Lemma~\ref{lem:half-line} implies that $S_0$ is a union of horizontal half-lines leaving the origin. 

Since $0$ is a singular point, $S$ can be represented near $0$ as the $t$-graph of a $C^1$ function and thus, since $S$ is a cone, the whole $S$ is the $t$-graph of a function $u:\R^2\to\R$.

If $L$ is one of the singular half-lines leaving the origin and $p\in L\setminus\{0\}$, then there is a neighborhood $U$ of $p$ such that $U\cap S_0=U\cap L$, because $T_pS=pH\neq H$ and $S_0\subset H$.
Therefore, these singular half-lines cannot accumulate 
and so we have a finite number of them $L_1,\ldots,L_k$, with $k\ge 1$
(see also Theorem C(b) in \cite{MR2983199}).

If $p\in S\setminus H$, then $p$ is a regular point and, by Lemma~\ref{lem5e97064c}, there is a half-line $L\subset S$ starting from a singular point $q\in S\cap H$, say $q\in L_j$.
Then $\bigcup_{\lambda>0}\delta_\lambda L$ describes $S$ on one side of $L_j$.
In other words, there is an arc $I_j^1$ so that $L_j$ is on the boundary of $0\#I_j^1$, so that $u$ is a version of the function $u_\alpha|_{\{y\ge0\}}$ or $u_\alpha|_{\{y\le0\}}$ in~\eqref{eq5e963809} on $0\#I_j^1$.

We conclude that for every $j\in\{1,\ldots,k\}$ there are two arcs $I_j^1$ and $I_j^2$, possibly empty, such that $S\setminus H$ is the graph of $u$ above 
$\bigcup_j (0\#I_j^{1}\cup 0\#I_j^{2})$.

Notice that $u$ does not have other singular points on $0\#\overline{I_j^1}$ other than $L_j$.
Therefore, if $i\neq j$, then $L_j\cap (0\#I_j^{1}\cup 0\#I_j^{2})=\emptyset$.
It follows that, in fact, $0\#I_j^{1}$ and $0\#I_j^{2}$ are never empty.
Indeed, on each side of every $L_j$ there are regular points, as we noticed above. 
However, they cannot belong to a sector of a singular half-line other than $L_j$.

Finally, the unit normal to $S$ is constant over each $0\#I_j^{1}\cup 0\#I_j^{2}$.
By Lemma~\ref{lem5e973be0}, it must have zero distributional divergence, while, 
by Proposition~\ref{prop5e95c6fe}, this happens exactly when it reflects across $L_j$, that is, the characteristic lines in $I_j^{1}$ and in $I_j^{2}$ meet $L_j$ with the same angle,
exactly as it happens with the function $u_\alpha$ in~\eqref{eq5e963809}.

Set $I_j=I_j^{1}\cup I_j^{2}$.
It is clear that $L_j$ is the bisectrix of $I_j$ and that $I_j\cap I_i=\emptyset$ 
if $i\neq j$.
\end{proof}

\bibliography{BIB-area-minimizing-cones}

\end{document}